\documentclass{amsart}
\usepackage{CommonMath,enumerate,tikz, geometry, paralist, longtable, doi, url}
\usetikzlibrary{decorations.pathmorphing,shapes}
\usepackage[center]{subfigure}
\usepackage[square,numbers]{natbib}
%Standard notation
\newcommand{\comment}[1]{}

\newcommand {\Z}{{\bf Z}}
\newcommand {\Q}{{\bf Q}}

\renewcommand {\P}{{\bf P}}
\newcommand {\from}{{\colon}}
\newcommand{\into}{{\hookrightarrow}}
\newcommand{\onto}{\twoheadrightarrow}
      %stack
        %sheaf (hom, ext etc)
\newcommand{\orb}[1]{{\mathcal #1}}     %orbifold
       %category
               % (coarse) space
     % deformation or "thickening"
\renewcommand {\o}[1]{\overline{#1}}    
                    % the base field
     % ideal
\newcommand{\dual}[1]{#1^{\vee}}
\newcommand{\isom}{\stackrel\sim\longrightarrow}

\DeclareMathOperator{\spec}{Spec}
\DeclareMathOperator{\proj}{Proj}
\DeclareMathOperator{\br}{br}
\DeclareMathOperator{\Sym}{Sym}
\DeclareMathOperator{\id}{id}
\DeclareMathOperator{\tr}{tr}

\DeclareMathOperator{\F}{\mathbf F}

\DeclareMathOperator{\Pic}{Pic}
\DeclareMathOperator{\ch}{ch}
\DeclareMathOperator{\td}{td}

\newcommand{\s}{\mathbf S}

% Cartesian square
%\newcommandx{\cartsquare}[3][3=.35]{
%    \draw ($#1!#3!#2$) rectangle ($#2!#3!#1$);
%}
\usepackage{tikz}
\tikzset{math/.style = {execute at begin node=$, execute at end node=$}}
\tikzset{map/.style = {font=\scriptsize}}

%%% Local Variables: 
%%% mode: plain-tex
%%% TeX-master: "main"
%%% End: 

\renewcommand{\o}{\overline}
\DeclareMathOperator{\mult}{mult}

\title{Sharp slope bounds for sweeping families of trigonal curves}
\author{Anand Deopurkar \and Anand Patel}

\begin{document}
\maketitle
\begin{abstract}
  We establish sharp bounds for the slopes of curves in $\o M_g$ that
  sweep the locus of trigonal curves, proving Stankova-Frenkel's
  conjectured bound of $7+6/g$ for even $g$ and obtaining the bound
  $7+20/(3g+1)$ for odd $g$. For even $g$, we find an explicit
  expression of the so-called Maroni divisor in the Picard group of
  the space of admissible triple covers. For odd $g$, we describe the
  analogous extremal effective divisor and give a similar explicit
  expression.
\end{abstract}
\section{Introduction}
The seminal papers of
\citet{harris82:_kodair_dimen_of_modul_space_of_curves}, and
\citet{eisenbud87:_kodair} prove that the moduli space $\o M_g$ of
Deligne--Mumford stable curves of genus $g$ is of general type for $g
\geq 23$. This result marks the beginning of the quest for describing
the log-canonical models of $\o M_g$, namely the spaces
\[ \o M_g(\alpha) = \proj \bigoplus_{n \geq 0} H^0\left(\o {\orb M}_g,
  n(K_{\o {\orb M}_g} + \alpha \delta)\right).\] Understanding these
spaces and the maps between them is an active area of
research with important contributions by Hassett, Hyeon, Keel,
Morrison and several others
\citep{fedorchuk12:_alter_compac_modul_spaces_curves}.

As a step towards this goal, it is important to understand the stable
base loci of the linear systems $|K_{\o {\orb M}_g} + \alpha \delta|$,
or equivalently the linear systems $|s\lambda - \delta|$, where
$\lambda$ is the Hodge class, $\delta$ the total boundary class and
$s = 13/(2-\alpha)$. To test whether a given $X \subset \o M_g$ lies
in such a base locus, we may cover it by curves of high slope
($\delta/\lambda$); if $X$ can be covered by curves of slope $s_0$
then it must be in the base locus of $|s\lambda-\delta|$ for $s <
s_0$. We are thus led to the following question.
\begin{question}\label{q:slope}
  For a subvariety $X \subset \o M_g$, what is the highest $s$ such
  that $X$ is covered by curves of slope $s$?
\end{question}
The first result in this direction came from the work of \citet{cornalba88:_divis}. They answered it for the locus of hyperelliptic curves, getting the slope bound $8+4/g$. Next, the work of \citet{Stankova-Frenkel00:_Modul_Of_Trigon_Curves} suggested that the answer for the locus of trigonal curves is $7+6/g$, at least if $g$ is even. This is now known to be correct, thanks to the work of \citet[Theorem~1.6]{barja09:_slopes} and the more recent work of \citet[Theorem~1.2, Corollary~3.2]{fedorchuk12:_stabil_hilber_point_canon_curves}. Since the loci of Brill--Noether special curves are expected to make up the base loci of the log-canonical linear systems, \autoref{q:slope} is particularly interesting for such loci in $\o M_g$.

In this article, we settle the question for the locus of trigonal curves of any genus. We reprove the $7+6/g$ bound for even $g$ and obtain the bound $7+20/(3g+1)$ for odd $g$.
\begin{theorem}[\autoref{thm:sweeping_slope_sharp_even}, \autoref{thm:sweeping_slope_sharp_odd} in the main text]\label{thm:main}
  Let $g \geq 4$. Denote by $\o T_g$ the closure in $\o M_g$ of
  the locus of smooth trigonal curves. Set
  \[ s_g = \begin{cases}
    7 + 6/g & \text{ if $g$ is even},\\
    7 + 20/(3g+1) & \text{ if $g$ is odd}.
  \end{cases}
  \]
  Then,
  \begin{enumerate}
  \item $\o T_g$ is covered by curves of slope $s_g$. \label{curve}
  \item If $B \subset \o T_g$ is a curve passing through a general
    point of $\o T_g$, then its slope is at most $s_g$. In
    particular, $\o T_g$ cannot be covered by curves of higher slope. \label{divisor}
  \end{enumerate}
  The word ``general'' in the second statement means away from an
  (explicitly given) effective divisor.
\end{theorem}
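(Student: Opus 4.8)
The plan is to move the question off $\o M_g$ onto a finite cover. Let $\widetilde T_g$ be the moduli space of admissible triple covers (in the sense of Harris--Mumford, or its twisted-stable-maps refinement): a proper smooth Deligne--Mumford stack with a finite morphism $\widetilde T_g \to \o M_g$ onto $\o T_g$, whose boundary is a union of finitely many irreducible divisors of two flavours --- those where the \emph{source} degenerates (the covering curve acquires a node) and those where the \emph{target} degenerates (a $\P^1$ bubbles off). The first task is to determine $\Pic(\widetilde T_g)\otimes\Q$: I would compute a generating set --- the Hodge class $\lambda$, the boundary divisors above, and the Maroni divisor $\mathfrak M$ (the locus where the Maroni invariant exceeds its generic value, which for even $g$ is $0$) --- \emph{together with the relations among them}. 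The computational engine is the Casnati--Ekedahl / Miranda structure theorem: a Gorenstein triple cover $\pi \colon C \to \P^1$ is a divisor in the Hirzebruch surface $\P(E)$ built from its Tschirnhausen bundle $E$ (rank $2$, degree $g+2$), cut out by a section of $\mathcal O_{\P(E)}(3)$ twisted by a pullback from $\P^1$. Over the locus of curves on a fixed scroll this exhibits $\widetilde T_g$ as an open subset of a projective bundle, which lets one read off the generators, the relations, and in particular \emph{explicit formulas for $\lambda$ and for the total boundary $\delta$} as combinations of effective classes; intersecting with the test families below provides cross-checks.

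For Part~\eqref{curve} I would produce, through a general point of $\widetilde T_g$, a one-parameter family of slope exactly $s_g$ and let it vary to sweep out $\widetilde T_g$. The natural candidate is a general pencil of trigonal curves inside the complete linear system on the scroll of minimal Maroni invariant --- $\mathbf F_0$ when $g$ is even, $\mathbf F_1$ when $g$ is odd. Blowing up its base points turns the pencil into a genus-$g$ Lefschetz fibration $X \to \P^1$ whose $\lambda_B$ and $\delta_B$ one computes from Noether's formula and an Euler-characteristic count in terms of the self-intersection and canonical degree of the curve class on the surface; substituting the adjunction relation that fixes those numbers in terms of $g$ yields slope $7 + 6/g$ in the even case and $7 + 20/(3g+1)$ in the odd case. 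These pencils cover $\widetilde T_g$ because through a general point a pencil exists and its general member is again a general trigonal curve --- and, crucially, every member carries the generic Maroni invariant, so the family is disjoint from $\mathfrak M$ and meets the boundary only along source-degeneration divisors.

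For Part~\eqref{divisor} the goal is to show that $s_g \lambda - \delta$, pulled back to $\widetilde T_g$, is \emph{effective}, and to identify its support --- whose image in $\o M_g$ is the advertised ``explicitly given effective divisor''. Using the relations from the first step, I would rewrite $s_g \lambda - \delta$ as a $\Q$-linear combination of $\mathfrak M$ and the \emph{target-degeneration} boundary divisors --- exactly those a general pencil on a fixed scroll avoids --- and verify that every coefficient is $\geq 0$; the source-degeneration divisors, which such pencils do meet, should drop out with coefficient $0$ by virtue of a relation, in the same way that $\delta_0$ disappears from the slope relation on the space of admissible hyperelliptic covers. Granting effectivity, a curve $B$ through a general point of $\o T_g$ lifts (after normalizing a component of its preimage) to a curve in $\widetilde T_g$ not contained in the support of $s_g \lambda - \delta$, so $(s_g \lambda - \delta) \cdot B \geq 0$; since slopes are preserved under the finite map, this gives $\operatorname{slope}(B) \leq s_g$.

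The main obstacle, I expect, is the positivity check in the last step. On the one hand, the coefficient of $\mathfrak M$ in $s_g \lambda - \delta$ being exactly non-negative is \emph{equivalent} to the sharpness of $s_g$, so it cannot hold by accident: it rests on an honest computation of the class of the Maroni divisor --- a Porteous-type calculation on the $\P(E)$-bundle --- together with careful bookkeeping of the boundary relations. On the other hand, and more seriously, for odd $g$ there is no Maroni \emph{divisor} at all: the first jump of the Maroni invariant, from $1$ to $3$, occurs in codimension $2$, so one must first \emph{identify} the correct extremal effective divisor $\mathfrak D$ playing the role of $\mathfrak M$ --- presumably a degeneracy locus for the Casnati--Ekedahl data, or a ramification-type divisor on $\widetilde T_g$ --- and then establish the analogue $s_g \lambda - \delta = c\, \mathfrak D + (\text{target-degeneration boundary})$ with $c \geq 0$. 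Pinning down $\mathfrak D$, and checking that its coefficient is non-negative, is where the genuinely new content of the odd case lies.
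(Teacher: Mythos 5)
Your even-genus argument tracks the paper closely (the paper's sweeping family \emph{is} a general pencil of $(3,n)$-curves on $\P^1\times\P^1$, and the divisor-class step is done on the admissible-cover space with the Maroni divisor), but the odd-genus half of part~\eqref{curve} contains a step that fails. A general pencil inside the complete linear system on a fixed $\F_1$ does \emph{not} have slope $7+20/(3g+1)$. Run the Noether/Euler-characteristic count you propose: the trigonal class on $\F_1$ is $3e+\frac{g+5}{2}f$ with self-intersection $C^2=3g+6$, so after blowing up the base points one gets $\lambda=\chi(\mathcal O_X)+g-1=g$ and $\delta=e(X)-2(2-2g)=\bigl(4+C^2\bigr)+4g-4=7g+6$, i.e.\ slope $7+6/g$, which is strictly \emph{smaller} than $7+20/(3g+1)$ for $g\geq 5$. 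So the naive pencil is not extremal in odd genus, and your claimed substitution cannot produce the stated number. The paper's sweeping family is genuinely different: over $B=\P^1$ it takes $S=\P^1\times\P^1$ and the Tschirnhausen bundle $E=O(n\sigma+F)\oplus O((n+1)\sigma+2F)$ with $g=2n-1$ (see \autoref{sec:odd_curve}); fiberwise the scroll is $\F_1$, but the two summands are twisted differently along the base, so the family is not a pencil on any fixed surface, and it is exactly this asymmetric twisting that pushes the slope up to $(21n+3)/(3n-1)=7+20/(3g+1)$. One also has to check such families actually sweep an open set, which the paper does via the surjectivity of the restriction map on $H^0(\Sym^3E\otimes\det\dual E)$.

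For part~\eqref{divisor} in odd genus you correctly diagnose the obstacle (the Maroni stratification jumps in codimension two) but leave the extremal divisor unidentified, and this is not a detail one can defer: the paper's answer is the \emph{tangency divisor} $\tau$, the closure of covers whose scroll is $\F_1$ and whose image is tangent to the directrix, with $2\tau=(21g+27)\lambda-(3g+1)\delta-(\text{nonnegative higher boundary})$ (\autoref{thm:class_tau}). Moreover, the class computation in the paper is not a Porteous calculation on a fixed $\P(E)$-bundle plus abstract boundary relations; it is carried out by test curves built from rank-two bundles on orbi-nodal surfaces (twisted admissible covers \`a la Abramovich--Corti--Vistoli), using Grothendieck--Riemann--Roch for Deligne--Mumford stacks to evaluate $\lambda$ and $\delta$, together with a genericity/crimping argument guaranteeing that the test curves meet the higher boundary divisors at general points -- this last point is what allows one to conclude that the boundary fibers contribute nothing to $\mu$ or $\tau$. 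Without identifying $\tau$ and supplying machinery of this kind (or an equivalent), both the positivity check you flag and the very statement of the odd-genus divisor relation remain open in your outline.
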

Our approach is classical, but we use a modern ingredient to make it
feasible. For \eqref{curve}, we explicitly construct curves with slope $s_g$ that sweep $\o T_g$. For \eqref{divisor}, we explicitly
construct an effective divisor $D \subset \o T_g$ such that
\[ a[D] = s_g \lambda - \delta - \sum_{i} a_i\delta_i,\] with $a$
positive and $a_i$ non-negative. It follows that any curve $B \subset
\o T_g$ whose generic point avoids $D$ and the boundary divisors
$\delta_i$ must have $(s_g \lambda - \delta) \cdot B \geq 0$, yielding
the bound in \eqref{divisor}. For even $g$, the divisor $D$
is the closure of the locus of smooth trigonal curves that embed in an
unbalanced scroll. For odd $g$, it is the closure of the
locus of smooth trigonal curves that embed in $\F_1$ and are tangent to
the directrix. Thus, an equivalent formulation of our result is that
$(s_g\lambda - \delta)$ spans an edge of the effective cone in the
$\langle \lambda, \delta \rangle$ plane in the Picard group of $\o T_g$.

Here is the road-map for the proof. In \autoref{sec:compactification},
we describe a compactification of $T_g$ using admissible covers which
is more convenient than $\o T_g$. In \autoref{sec:formulas}, we recall
a classical description of flat triple covers in terms of linear
algebraic data on the base and express the standard divisor classes
$\lambda$, $\kappa$, $\delta$ in terms of this data. In
\autoref{sec:curve}, we finish off \autoref{thm:main}~\eqref{curve}
using the linear algebraic description. This section does not require
the more delicate orbifold considerations in the previous sections. In
\autoref{sec:divisor}, we prove \autoref{thm:main}~\eqref{divisor} by
computing the divisor class of $D$ in the Picard group of the
admissible cover compactification.

We would like to highlight the modern ingredient that makes
our approach successful. To compute the divisor class of $D$, we use
test-curves that intersect the boundary divisors at generic
points. This lets us circumvent having to identify the points of the
boundary that lie in $D$. Constructing enough such test-curves,
however, is challenging. The basis of our trigonal constructions---the
linear algebraic description---fails for admissible covers, which are
not necessarily flat. The idea that rescues us is the formulation of
admissible covers as flat covers of orbi-nodal curves due to
\citet*{acv:03}. Another piece of modern technology, namely the
Grothendieck--Riemann--Roch theorem for Deligne--Mumford stacks
\citep{edidin12:_rieman_roch_delig_mumfor} makes this computationally
feasible.

We work over an algebraically closed field $k$ of characteristic
zero. \emph{Genus} always means \emph{arithmetic genus}. By the genus
of an orbi-curve, we mean the genus of its coarse space. A
\emph{cover} is a representable, finite, flat morphism. A triple cover
$\phi \from C \to \P^1$ is \emph{balanced} (resp. \emph{unbalanced})
if the degrees of the two summands of the vector bundle $\phi_* O_C /
O_{\P^1}$ differ by at most $1$ (resp. at least
$2$). Projectivizations are spaces of one dimensional
\emph{quotients}, following Grothendieck's convention. Throughout,
$\zeta$ denotes a primitive $n$th root of unity.

The work in this paper began after conversations with Maksym Fedorchuk
when the authors were graduate students of Joe Harris. We thank both
of them for their kindness and appreciation. During the preparation of
the article, we learned about the results of \citet{beorchia12:_m}
about similar questions. We thank them for sharing their manuscripts
and thoughts.
%\marginpar{char $> 3$ is probably sufficient, if anyone cares.}

\section{A compactification of $T_g$ and its boundary}\label{sec:compactification}
Let $\o{\orb M}_g$ be the moduli stack of Deligne--Mumford stable curves of genus $g$.  Let $\o {\orb H}^3_g$ be the moduli stack of twisted admissible covers of genus $g$ and degree three with unordered branch points where at most two branch points are allowed to coincide. In symbols
\[ \o{\orb H}^3_g = \{(\phi \from \orb C \to \orb P, \orb P \to P)\},\]
where
\begin{itemize}[\qquad]
\item $P$ is a connected nodal curve of genus zero,
\item $\orb P$ is a balanced orbi-nodal curve as in \citep{acv:03} with coarse space $\orb P \to P$,
\item $\orb C$ is a connected orbi-nodal curve of genus $g$,
\item $\phi \from \orb C \to \orb P$ is a twisted admissible cover of degree $3$ in the sense of \citep{acv:03}, 
\item $(P, \br\phi)$ is a $w$-stable pointed curve in the sense of \citep{hassett03:_modul} with $w = (\frac12, \dots, \frac12)$. 
\end{itemize}
For the convenience of the reader, we briefly recall the meanings of some of the terms. A \emph{balanced orbi-nodal curve} $\orb P \to P$ is a  modification of $P$ at the nodes; \'etale locally around the nodes, $\orb P \to P$ has the form
\[ [(\spec k[u,v]/uv)/\mu_n] \to \spec k[x,y]/xy,\]
where $\mu_n$ acts by $\zeta \from u \mapsto \zeta u$ and $\zeta \from v \mapsto \zeta^{-1}v$. A \emph{twisted admissible cover} $\phi \from \orb C \to \orb P$ is a representable, flat, finite morphism of degree $d$, \'etale over the nodes and the generic points of the components of $\orb P$ such that the associated map $\orb P \setminus \br\phi \to B\s_d$ is representable. The last condition is simply to ensure that the stack structure on $\orb P$ is the minimal required. Note that the induced map on the coarse spaces $\phi \from C \to P$ is an admissible cover in the sense of \citet{harris82:_kodair_dimen_of_modul_space_of_curves}. Finally, the  $w$-stability condition means that $\mult_p \br\phi \leq 2$ for all $p \in P$, and $\omega_P\left( \frac{\br\phi}{2}\right)$ is ample. 

By standard methods, one can prove that $\o {\orb H}^3_g$ is a connected, smooth, proper, Deligne--Mumford stack with a projective coarse space. We refer the reader to \citep{deopurkar12:_compac_hurwit} for the technical details. The compactification $\o {\orb H}^3_g$ and the more customary compactification by twisted admissible covers (where the branch points are distinct) are isomorphic on the level of coarse spaces. The functorial description that allows two branch points to collide, however, is quite convenient.

The open locus $\orb H^3_g$ where $P$ is smooth and $\br\phi$ reduced is the locus of simply branched trigonal curves. When two branch points coincide, $\orb C$ may develop either a node or a triple ramification point. In any case, the coarse space of $\orb C$ is at worst a nodal curve of genus $g$. By contracting unstable rational tails, we get a morphism $\o{\orb H}^3_g \to \o{\orb M}_g$. We denote by $\lambda$ the Hodge line bundle and by $\delta$ the total boundary divisor of $\o {\orb M}_g$. Abusing notation, we denote their pullbacks to $\o {\orb H}^3_g$ by the same letters.

The boundary $\o {\orb H}^3_g \setminus \orb H^3_g$ is a union of several divisors which we pictorially list in \autoref{fig:boundary}. For clarity, we draw the admissible cover $C \to P$ instead of the twisted admissible cover $\orb C \to \orb P$.
\begin{figure}[h]
  \centering
  \subfigure[$A$]{
    \begin{tikzpicture}[yscale=.25, xscale=1, thick]
      \draw [smooth, tension=1]
      plot coordinates{(1,1) (0.2, 1) (-0.2,-1) (-1,-1)};
      \draw (-1,-5) -- (1, -5);
      \path (0,-2.7) edge [->, thin] (0,-4.2);
    \end{tikzpicture}
  }\qquad \qquad
  \subfigure[$\Delta$]{
    \begin{tikzpicture}[yscale=.25, xscale=1, thick]
      \draw [smooth, tension=.5]
      plot coordinates{(1,1) (-1, -1) (-1,1) (1,-1) (1,-2) (-1,-2)} ;
      \draw (-1,-5) -- (1, -5);
      \path (0,-2.7) edge [->, thin] (0,-4.2);
    \end{tikzpicture}
  }\qquad\qquad
  \subfigure[$H$]{
    \begin{tikzpicture}[yscale=.25, xscale=1, thick]
      \draw [smooth, tension=2]
      plot coordinates{(1,1) (0, .5) (1,-.5)}
      plot coordinates{(-1,1) (0,.5) (-1,-.8)}
      plot coordinates{(-1,0) (1, -1.5)};
      \draw (-1,-5) -- (1, -5);
      \path (0,-2.7) edge [->, thin] (0,-4.2);
      \draw (1, .5) node [right] {$g$};
      \draw (1, -2) node [right] {$0$};
    \end{tikzpicture}
  }

  \subfigure[$\Delta_1(g_1,g_2)$ \newline with $g_1+g_2 = g-2$ and $g_i \geq 0.$]{
    \begin{tikzpicture}[yscale=.25, xscale=.8, thick]
      \path[dotted] (-1.5,-8) rectangle (3.5,4);
      \begin{pgflowlevelscope}{\pgftransformrotate{20}}
        \draw [smooth, tension=1.5]
        plot coordinates{(-1,1) (0, .2) (-1,-1) (1,-1)}
        plot coordinates{(1.1,1) (0, .2) (1,0)};
        \draw (-1,-5) -- (.7, -5);
        \path (-.5,-2.7) edge [->, thin] (-.5,-4.2);
      \end{pgflowlevelscope}
      \begin{pgflowlevelscope}{ 
          \pgftransformxscale{-1}
          \pgftransformxshift{-1.4cm}           
          \pgftransformrotate{20}}
        \draw [smooth, tension=1.5]
        plot coordinates{(-1,1) (0, .2) (-1,-1) (1,-1)}
        plot coordinates{(1.1,1) (0, .2) (1,0)};
        \draw (-1,-5) -- (.7, -5);
        \path (-.5,-2.7) edge [->, thin] (-.5,-4.2);
      \end{pgflowlevelscope}
      \draw (-1.3,0) node {$g_1$};
      \draw (3.3,0) node  {$g_2$};
    \end{tikzpicture}
  }
  \quad
  \subfigure[$\Delta_2(g_1,g_2)$ \newline with $g_1+g_2 = g-1$ and $g_i \geq 0$]{
    \begin{tikzpicture}[yscale=.25, xscale=.8, thick]
      \path[dotted] (-1.5,-8) rectangle (3.5,4);
      \begin{pgflowlevelscope}{\pgftransformrotate{20}}
        \draw [smooth, tension=1.5]
        plot coordinates{(1.2,1) (0, .2) (.82,-.7) (-1,-1)}
        plot coordinates{(-1.1,1) (0, .2) (-1,0)};
        \draw (-1,-5) -- (.7, -5);
        \path (-.5,-2.7) edge [->, thin] (-.5,-4.2);
      \end{pgflowlevelscope}
      \begin{pgflowlevelscope}{ 
          \pgftransformxscale{-1}
          \pgftransformxshift{-1.4cm}           
          \pgftransformrotate{20}}
        \draw [smooth, tension=1.5]
        plot coordinates{(1.2,1) (0, .2) (.82,-.7) (-1,-1)}
        plot coordinates{(-1.1,1) (0, .2) (-1,0)};
        \draw (-1,-5) -- (.7, -5);
        \path (-.5,-2.7) edge [->, thin] (-.5,-4.2);
      \end{pgflowlevelscope}
      \draw (-1.3,-1) node {$g_1$};
      \draw (3.3,0) node  {$g_2$};
    \end{tikzpicture}
  }
  \quad
  \subfigure[$\Delta_3(g_1,g_2)$ \newline with $g_1+g_2 = g$ and $g_i \geq 1$]{
    \begin{tikzpicture}[yscale=.25, xscale=.8, thick]
      \path[dotted] (-1.5,-8) rectangle (3.5,4);
      \begin{pgflowlevelscope}{\pgftransformrotate{20}}
        \draw [smooth, tension=3]
        plot coordinates{(-1,1) (1, 0) (-1,-1)}
        plot coordinates{(-1,0) (1,0)};
        \draw (-1,-5) -- (.7, -5);
        \path (-.5,-2.7) edge [->, thin] (-.5,-4.2);
      \end{pgflowlevelscope}
      \begin{pgflowlevelscope}{ 
          \pgftransformxscale{-1}
          \pgftransformxshift{-1.5cm}           
          \pgftransformrotate{20}}
        \draw [smooth, tension=3]
        plot coordinates{(-1,1) (1, 0) (-1,-1)}
        plot coordinates{(-1,0) (1,0)};
        \draw (-1,-5) -- (.7, -5);
        \path (-.5,-2.7) edge [->, thin] (-.5,-4.2);
      \end{pgflowlevelscope}
      \draw (-1.3,0) node {$g_1$};
      \draw (3.3,0) node  {$g_2$};
    \end{tikzpicture}
  }

  \subfigure[$\Delta_4(g_1,g_2)$ \newline with $g_1+g_2 = g-1$ and $g_2\geq 1$]{
    \begin{tikzpicture}[yscale=.25, xscale=.8, thick]
      \path[dotted] (-1.5,-8) rectangle (3.5,4);
      \begin{pgflowlevelscope}{\pgftransformrotate{20}}
        \draw [smooth, tension=1.5]
        plot coordinates{(-1,1) (0, .2) (-1,-1) (1,-1)}
        plot coordinates{(1.1,1) (0, .2) (1,0)};
        \draw (-1,-5) -- (.7, -5);
        \path (-.5,-2.7) edge [->, thin] (-.5,-4.2);
      \end{pgflowlevelscope}
      \begin{pgflowlevelscope}{ 
          \pgftransformxscale{-1}
          \pgftransformxshift{-1.4cm}           
          \pgftransformrotate{20}}
        \draw [smooth, tension=2]
        plot coordinates{(1.1,1) (0, .2) (1.1,0)}
        plot coordinates{(-1,.8) (0, .2) (-1,0)}
        plot coordinates{(1,-1) (-1, -1)};
        \draw (-1,-5) -- (.7, -5);
        \path (-.5,-2.7) edge [->, thin] (-.5,-4.2);
      \end{pgflowlevelscope}
      \draw (-1.3,0) node {$g_1$};
      \draw (3.2,0) node  {$g_2$};
    \end{tikzpicture}
  }
  \quad
  \subfigure[$\Delta_5(g_1,g_2)$ \newline with $g_1+g_2 = g$ and $g_2 \geq 1$]{
    \begin{tikzpicture}[yscale=.25, xscale=.8, thick]
      \path[dotted] (-1.5,-8) rectangle (3.5,4);
      \begin{pgflowlevelscope}{\pgftransformrotate{20}}
        \draw [smooth, tension=1.5]
        plot coordinates{(1.2,1) (0, .2) (.82,-.7) (-1,-1)}
        plot coordinates{(-1.1,1) (0, .2) (-1,0)};
        \draw (-1,-5) -- (.7, -5);
        \path (-.5,-2.7) edge [->, thin] (-.5,-4.2);
      \end{pgflowlevelscope}
      \begin{pgflowlevelscope}{ 
          \pgftransformxscale{-1}
          \pgftransformxshift{-1.4cm}           
          \pgftransformrotate{20}}
        \draw [smooth, tension=2]
        plot coordinates{(-1,0) (.85, -.6) (-1,-1)}
        plot coordinates{(-1,1) (1.2,1)};
        \draw (-1,-5) -- (.7, -5);
        \path (-.5,-2.7) edge [->, thin] (-.5,-4.2);
      \end{pgflowlevelscope}
      \draw (-1.2,-2) node {$g_1$};
      \draw (3,-2) node  {$g_2$};
    \end{tikzpicture}
  }
  \quad
  \subfigure[$\Delta_6(g_1,g_2)$ \newline with $g_1+g_2 = g$ and $g_i \geq 1$]{
    \begin{tikzpicture}[yscale=.25, xscale=.8, thick]
      \path[dotted] (-1.5,-8) rectangle (3.5,4);
      \begin{pgflowlevelscope}{\pgftransformrotate{20}}
        \draw [smooth, tension=2]
        plot coordinates{(1.1,-1) (0, -.3) (1.1,0)}
        plot coordinates{(-1,-.8) (0, -.3) (-1,0)}
        plot coordinates{(1.1,1) (-1, 1)};
        \draw (-1,-5) -- (.7, -5);
        \path (-.5,-2.7) edge [->, thin] (-.5,-4.2);
      \end{pgflowlevelscope}
      \begin{pgflowlevelscope}{ 
          \pgftransformxscale{-1}
          \pgftransformxshift{-1.4cm}           
          \pgftransformrotate{20}}
        \draw [smooth, tension=2]
        plot coordinates{(1.1,1) (0, .2) (1.1,0)}
        plot coordinates{(-1,.8) (0, .2) (-1,0)}
        plot coordinates{(1,-1) (-1, -1)};
        \draw (-1,-5) -- (.7, -5);
        \path (-.5,-2.7) edge [->, thin] (-.5,-4.2);
      \end{pgflowlevelscope}
      \draw (-1.3,-2) node {$g_1$};
      \draw (3.2,0) node  {$g_2$};
    \end{tikzpicture}
  }
\caption{Boundary divisors of $\o{\orb H}_g^3$}
\label{fig:boundary}
\end{figure}
The pictures are, of course, symbolic and only show the generic points. They are to be read as follows. A generic point of $A$ represents a trigonal curve with a triple ramification point. A generic point of $\Delta$ represents an irreducible trigonal curve with a node. A generic point of $H$ represents a hyperelliptic curve of genus $g$ with a rational tail attached at a non-ramification point. In the rest of the boundary divisors, the base is reducible. A generic point of $\Delta_3(g_1, g_2)$ represents two irreducible trigonal curves of genus $g_1$ and $g_2$ glued at a triple ramification point; this union maps to a union of two $\P^1$s glued at a point. Likewise, a generic point of $\Delta_5(g_1, g_2)$ represents an irreducible trigonal curve of genus $g_1$ attached to a $\P^1$ at a non-ramification point and to a hyperelliptic curve of genus $g_2$ at a simple ramification point; this apparatus maps to a union of two $\P^1$s glued at a point as shown in the diagram. We leave the interpretations of the other pictures to the reader. The order of $g_1$ and $g_2$ is important only in $\Delta_4(g_1,g_2)$ and $\Delta_5(g_1,g_2)$.

We often call the divisors $\Delta_i(g_1,g_2)$ (for $1 \leq i \leq 6$) and $H$ \emph{higher boundary divisors}. 
\begin{proposition}
  The classes $\lambda$, $\delta$, and the higher boundary divisors $H$, $\Delta_i(g_1, g_2)$ for $1 \leq i \leq 6$ as shown in \autoref{fig:boundary} form a basis of $\Pic_\Q(\o{\orb H}_g^3)$.
\end{proposition}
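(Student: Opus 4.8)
The plan is to reduce to the interior stack $\orb H^3_g$ by excision and then establish linear independence by pairing against test curves. By the localization sequence for the boundary $\o{\orb H}^3_g \setminus \orb H^3_g$, the restriction map $\Pic_\Q(\o{\orb H}^3_g) \to \Pic_\Q(\orb H^3_g)$ is surjective with kernel spanned by the classes of the boundary divisors. The interior satisfies $\Pic_\Q(\orb H^3_g) = \Q\lambda$: by the linear-algebraic description of \autoref{sec:formulas}, $\orb H^3_g$ is an open substack of an affine bundle over the quotient by $\PGl_2$ of the stack of pairs $(\P^1, E)$ with $E$ a rank-two bundle of degree $g+2$, a stack of rational Picard rank at most two; the potential second class---represented by the Maroni divisor when $g$ is even---is a multiple of $\lambda$, and $\lambda$ is nontorsion there since it pulls back from $\orb M_g$. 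Hence $\lambda$ together with the boundary divisors span $\Pic_\Q(\o{\orb H}^3_g)$.

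The second step is to pin the boundary divisors down as the list of \autoref{fig:boundary} and then trade $A$ and $\Delta$ for $\delta$. A codimension-one degeneration of a twisted admissible triple cover occurs either when two branch points collide on a smooth base $P$---giving $A$ or $\Delta$ according to whether $\orb C$ acquires a triple point or a node---or when $P$ acquires a single node; running through the triple covers over each component of a reducible $P$ and matching ramification profiles across its node yields precisely $H$ and the six families $\Delta_i(g_1,g_2)$ with the stated symmetry conventions. Since $\delta$ is the pullback of the total boundary of $\o{\orb M}_g$, and $A$ and $H$ map into the interior $\orb M_g$, the class $\delta$ is a rational combination of $\Delta$ and the higher boundary divisors with nonzero coefficient on $\Delta$; this trades $\Delta$ for $\delta$, and the class formula for $\delta$ (from \autoref{sec:formulas}) together with the further relation that holds among the boundary divisors likewise recovers $A$. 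Thus $\{\lambda, \delta, H, \Delta_i(g_1,g_2)\}$ spans $\Pic_\Q(\o{\orb H}^3_g)$---a finite linear-algebra verification.

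It remains to prove that $\{\lambda, \delta, H, \Delta_i(g_1,g_2)\}$ is linearly independent. I would order the higher boundary divisors and, for each, construct a complete one-parameter family of admissible covers---a test curve $B \subset \o{\orb H}^3_g$---meeting that divisor transversally and avoiding $H$, $\Delta$, and all the later divisors, so that $B$ pairs to zero with the later basis elements. Adding one test curve lying inside $\orb H^3_g$ (which sees only $\lambda$) and one meeting the boundary only along $\Delta$ (which sees $\delta$), one obtains, once the pairings with $\lambda$ and $\delta$ are computed, an intersection matrix that is triangular with nonzero diagonal entries. Independence follows, and being also a spanning set the collection is a basis.

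The hardest step is the construction of enough test curves, because the linear-algebraic parametrization of triple covers fails at the boundary: admissible covers need not be flat. I would circumvent this exactly as the paper does for its other computations---by viewing admissible covers as honest flat triple covers of orbi-nodal curves in the sense of \citet*{acv:03}, which restores a linear-algebraic handle on the families, and by using Grothendieck--Riemann--Roch for Deligne--Mumford stacks \citep{edidin12:_rieman_roch_delig_mumfor} to evaluate the intersection numbers $\lambda\cdot B$ and $\delta\cdot B$.
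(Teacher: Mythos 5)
Your overall architecture---excision to kill the interior, then test curves for independence---is the same as the paper's, but your key input about the interior is wrong, and the error matters. The paper's proof rests on the fact, cited from \citet{Bolognesi09:_Stack_Of_Trigon_Curves} and \citet{Stankova-Frenkel00:_Modul_Of_Trigon_Curves}, that $\Pic_\Q(\orb H^3_g)$ is \emph{trivial}, not $\Q\lambda$. Your justification that $\lambda$ is nontorsion on the interior ``since it pulls back from $\orb M_g$'' is a non sequitur: the restriction of a nontorsion class to a substack can perfectly well be torsion, and here it is---rationally, every divisor class on $\orb H^3_g$, including $\lambda$ and the even-genus Maroni class, vanishes. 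This is not cosmetic. The triviality of the interior Picard group is precisely what produces the one relation you need: it says $\lambda$ itself is a rational combination of the boundary classes $A$, $\Delta$, $H$, $\Delta_i(g_1,g_2)$, and a test-curve check that the coefficient of $A$ in this relation is nonzero is what allows one to swap $A$ for $\lambda$ (just as the pullback formula for $\delta$ allows swapping $\Delta$ for $\delta$). With your premise, the spanning set $\{\lambda, A, \Delta, H, \Delta_i(g_1,g_2)\}$ has one element too many relative to the claimed basis, and your appeal to an unspecified ``further relation that holds among the boundary divisors'' to eliminate $A$ is unsubstantiated; indeed, if $\Pic_\Q(\orb H^3_g)$ really were $\Q\lambda$ with $\lambda$ not supported on the boundary, the proposition as stated would be false.

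The remaining steps are fine in outline and agree with the paper: $\delta$ pulls back to a combination of $\Delta$ and the higher boundary divisors with nonzero coefficient on $\Delta$ (since $A$ and $H$ dominate loci of smooth stable curves), so $\Delta$ may be traded for $\delta$; and independence of $\lambda$, $\delta$, $H$, $\Delta_i(g_1,g_2)$ is checked by intersecting with test curves of the kind constructed in \autoref{sec:divisor}, using the orbi-nodal formulation of admissible covers and \autoref{thm:kappa-lambda} to evaluate $\lambda$ and $\delta$ on them. To repair the write-up, replace your first paragraph by the citation (or a proof) of the triviality of $\Pic_\Q(\orb H^3_g)$, and make explicit the test-curve verification that the resulting expression of $\lambda$ in boundary classes has nonzero coefficient on $A$.
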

\begin{proof}
  We know that $\Pic_\Q({\orb H}^3_g)$ is trivial (see \citep{Bolognesi09:_Stack_Of_Trigon_Curves} or \citep{Stankova-Frenkel00:_Modul_Of_Trigon_Curves}). It follows that $\Pic_\Q(\o{\orb H}_g^3)$ is generated by $A$, $\delta$ and the higher boundary divisors. By test-curve calculations (for example, those in \autoref{sec:divisor}), one can verify that these are independent and that one can use $\lambda$ instead of $A$.
\end{proof}
\section{Evaluating the divisor classes on a family of triple covers}\label{sec:formulas}
In this section, we recall the classical description of triple covers in terms of linear algebraic data on the base and express the divisor classes $\lambda$ and $\delta$ in terms of invariants of this data.

Let $Y$ be an integral scheme and $\phi \from X \to Y$ a finite flat map of degree 3. We have the exact sequence
\[ 0 \to O_Y \to \phi_* O_X \to F \to 0,\]
where $F$ is a locally free sheaf of rank 2 on $Y$. The sequence admits a splitting by $\frac13$ times the trace map $\frac13\tr \from \phi_* O_X \to O_Y$. Set $E = \dual{F}$ and let $p \from \P E \to Y$ be the projectivization. If $\phi$ has Gorenstein fibers then $X$ can be embedded as a Cartier divisor in $\P E$.  In that case, the resolution of $O_X$ as an $O_{\P E}$ module is given by
\[ 0 \to p^* \det E \otimes O_{\P E}(-3) \to O_{\P E} \to O_X .\]
Thus, a triple cover of $Y$ is determined by a locally free sheaf $E$ of rank 2 and a section on $\P E$ of the line bundle $O_{\P E}(3) \otimes p^* \det \dual{E}$, or equivalently by a section on $Y$ of the vector bundle $\Sym^3(E) \otimes \det \dual{E}$. Being local on the base, this description holds also if $Y$ is an algebraic stack.

For the rest of the section, we work with the following setup:
\begin{equation}\label{eq:calculation_setup}
  \begin{split}
  B &= \text{a smooth projective curve,}\\
  \pi \from \orb S \to B &= \text{a generically smooth, proper family     of orbi-nodal curves}\\
\phi \from \orb C \to \orb S &= \text{a Gorenstein triple cover}\\
E &= \dual{(\phi_* O_{\orb C}/O_{\orb S})}.
\end{split}
\end{equation}
Denote by $\omega_{\orb C}$ the relative dualizing sheaf of $\pi \from \orb C \to B$ and set $\pi_{\orb C} = \pi \circ \phi \from \orb C \to B$. Define
\[ \kappa_{\orb C} = {\pi_{\orb C}}_*(\omega_{\orb C}^2) \text{ and } \lambda_{\orb C} = c_1(R{\pi_{\orb C}}_* O_{\orb C}).\]
Define $\omega_{\orb S}$, $\kappa_{\orb S}$ and $\lambda_{\orb S}$ likewise. Our first task is to express $\lambda_{\orb C}$ and $\kappa_{\orb C}$ in terms of the invariants of $\orb S$ and $E$. Note that $\lambda$ and $\kappa$ are unchanged even if we replace the orbifolds by their coarse spaces.

For $a \in \Z/n\Z$, denote by $k(a)$ the vector bundle on $B\mu_n$ corresponding to the character $\zeta \mapsto \zeta^a$. Denote by $\delta_{\orb S}$ the degree of the singular locus of $\pi \from \orb S \to B$ interpreted in the orbifold sense. For example, the orbi-node $[k[u,v,t]/(uv-t)/\mu_n]$ contributes $\frac{1}{n}$. We caution the reader that the relation $12\lambda = \kappa + \delta$ does not hold for orbi-nodal curves.

\begin{proposition}\label{thm:kappa-lambda}
  In the setup of \eqref{eq:calculation_setup}, we have
\begin{align*}
    \kappa_{\orb C} &= 3\kappa_{\orb S}+ 2c_1(E)^2  + 4c_1(E)\omega_{\orb S} - 3c_2(E), \text{ and }\\
\lambda_{\orb C} &= \lambda_{\orb S} + \frac{\kappa_{\orb S}+\delta_{\orb S}}{6} + \frac{c_1(E)^2}{2} + \frac{c_1(E)\omega_{\orb S}}{2} - c_2(E) + \sum_{\text{Orbi }p \in \orb S} \chi(p),
\end{align*}
where $\chi(p)$ is defined as follows. Consider a point $p \in \orb S$ with stabilizer group $\mu_n$. Let the restriction of $E$ to the $B\mu_n$ based at $p$ be $k(-a) \oplus k(-b)$. Let $\zeta$ be a primitive $n$th root of unity. Then
\[ \chi(p) = \frac{1}{n}\sum_{i=1}^{n-1}\left(\frac{\zeta^{ia}+\zeta^{ib}}{2-\zeta^i-\zeta^{-i}}\right).\]
\end{proposition}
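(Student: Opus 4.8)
The plan is to prove the two identities separately, since they have different characters: the formula for $\kappa_{\orb C}$ is elementary intersection theory on the projective bundle $\P E$ and is insensitive to the orbifold structure, whereas the formula for $\lambda_{\orb C}$ rests on the Grothendieck--Riemann--Roch theorem for Deligne--Mumford stacks, whose inertia (twisted sector) contributions are exactly what produce the correction terms $\chi(p)$.

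For $\kappa_{\orb C}$ I would start from the embedding of the Gorenstein cover $\orb C$ as a Cartier divisor in $p \from \P E \to \orb S$ recalled above, with $O_{\P E}(-\orb C) = O_{\P E}(-3)\otimes p^*\det E$. Adjunction together with $\omega_{\P E/\orb S} = O_{\P E}(-2)\otimes p^*\det E$ gives $\omega_{\orb C/\orb S} = O_{\P E}(1)|_{\orb C}$; by functoriality of the relative dualizing sheaf along the Gorenstein composite $\orb C \to \orb S \to B$ this upgrades to $\omega_{\orb C/B} = O_{\P E}(1)|_{\orb C}\otimes\phi^*\omega_{\orb S/B}$. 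Writing $\xi = c_1(O_{\P E}(1))$, the number $\kappa_{\orb C}$ is then the degree of $[\orb C]\cdot(\xi + p^*\omega_{\orb S})^2$ on $\P E$, where $[\orb C] = 3\xi - p^*c_1(E)$; expanding this, reducing $\xi^2$ and $\xi^3$ by the Grothendieck relation $\xi^2 = p^*c_1(E)\,\xi - p^*c_2(E)$, and using $p_*\xi = 1$, $p_*1 = 0$ with the projection formula, I expect to land on $3\kappa_{\orb S} + 2c_1(E)^2 + 4c_1(E)\omega_{\orb S} - 3c_2(E)$. This part is routine bookkeeping; the only places to watch are the sign in the Grothendieck relation and the functoriality of $\omega$ for the composite (which is legitimate since every morphism in play is Gorenstein, and these classes do not change under passing to coarse spaces).

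For $\lambda_{\orb C}$, the trace map splits $0 \to O_{\orb S} \to \phi_*O_{\orb C} \to \dual E \to 0$, so $\phi_*O_{\orb C} \cong O_{\orb S}\oplus\dual E$; combined with $R\pi_{\orb C *}O_{\orb C} = R\pi_*(\phi_*O_{\orb C})$ (because $\phi$ is finite) this yields $\lambda_{\orb C} = \lambda_{\orb S} + c_1(R\pi_*\dual E)$. Everything therefore reduces to computing $c_1(R\pi_*\dual E)$ for the generically smooth family of orbi-nodal curves $\pi \from \orb S \to B$ by the stacky Grothendieck--Riemann--Roch theorem, and I would organize the output by the components of the inertia stack $I\orb S$. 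The identity component reproduces the usual nodal-curve computation of Mumford carried out on the orbifold: the locally free part of the virtual relative tangent contributes the degree-one part of $\pi_*\big(\ch(\dual E)\,\td(\dual{\omega_{\orb S/B}})\big)$, which works out to $\tfrac16\kappa_{\orb S} + \tfrac12 c_1(E)^2 + \tfrac12 c_1(E)\omega_{\orb S} - c_2(E)$, while the node correction (linear in the rank, which is two) gives $\tfrac16\delta_{\orb S}$, each orbi-node entering with weight $1/n$ exactly as in the definition of $\delta_{\orb S}$. The nontrivial components of $I\orb S$ then supply the remaining term $\sum_p\chi(p)$.

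The main obstacle, and the heart of the matter, is this last contribution. Near a point $p$ with stabilizer $\mu_n$ the family is \'etale-locally $[\spec k[u,v]/\mu_n]$ with $\zeta \from u \mapsto \zeta u$, $v \mapsto \zeta^{-1}v$, mapping to $B$ by the invariant $t = uv$; here $\orb S$ is a smooth two-dimensional stack and the central fibre acquires an orbi-node. For $g = \zeta^i$ with $1 \le i \le n-1$ the fixed locus is the reduced point $\{u = v = 0\}$, a copy of $B\mu_n$ lying over a single point of $B$, whose normal bundle is $k(1)\oplus k(-1)$ and on which $g$ acts with eigenvalues $\zeta^i, \zeta^{-i}$. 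Since this sector has relative dimension zero over $B$, extracting the degree-one part of the output only uses the degree-zero parts of the twisted Chern character and the twisted Todd class: with $\dual E|_{B\mu_n} = k(a)\oplus k(b)$ the $g$-trace on $\dual E$ is $\zeta^{ia}+\zeta^{ib}$, the normal-bundle factor is $\frac{1}{(1-\zeta^i)(1-\zeta^{-i})} = \frac{1}{2-\zeta^i-\zeta^{-i}}$, and pushing forward along $B\mu_n$ over its point of $B$ introduces a factor $\tfrac1n$; summing over $i$ gives $\chi(p) = \tfrac1n\sum_{i=1}^{n-1}\frac{\zeta^{ia}+\zeta^{ib}}{2-\zeta^i-\zeta^{-i}}$. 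What needs real care is not this local geometry but the surrounding bookkeeping: fixing the precise form of stacky Grothendieck--Riemann--Roch so that the twisted-sector factors, the $1/n$ coming from the gerbe $B\mu_n$, and the clean separation of the node's contribution to $\delta_{\orb S}$ from the genuine inertia correction all fit together without double-counting.
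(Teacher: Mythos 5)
Your proposal is correct and follows essentially the same route as the paper: adjunction in $\P E$ plus the Grothendieck relation for $\kappa_{\orb C}$, and the trace splitting $\phi_*O_{\orb C}\cong O_{\orb S}\oplus \dual E$ followed by Grothendieck--Riemann--Roch for Deligne--Mumford stacks, organized by components of the inertia stack, for $\lambda_{\orb C}$. Your twisted-sector computation (trace $\zeta^{ia}+\zeta^{ib}$, normal-bundle factor $1/(2-\zeta^i-\zeta^{-i})$, and the $1/n$ from $B\mu_n$) reproduces exactly the paper's derivation of $\chi(p)$, and your split of the identity-sector term into $\tfrac16\kappa_{\orb S}$ plus the node correction $\tfrac16\delta_{\orb S}$ is just a repackaging of the paper's use of $\td$ with $c_2(\Omega_{\orb S/B})=\delta_{\orb S}$.
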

\begin{proof}
  In the following computation, pullbacks and push-forwards have been   omitted unless it causes ambiguity. Unless otherwise indicated, dualizing sheaves are relative to $B$. We think of $\orb C$ as embedded in $\P E$ over $\orb S$. Let $\xi$ be the class of $O_{\P   E}(1)$ on $\P E$. Since the embedding $\orb C \into \P E$ is by the relative canonical $\omega_{\orb C/\orb S}$, we have
\[ \omega_{\orb C} = \xi + \omega_{\orb S}.\]
Now, we compute
\begin{align*}
  \kappa_{\orb C} &= \omega_{\orb C}^2 \\
  &= (\xi + \omega_{\orb S})^2|_{\orb C} \\
  &= (\xi^2 + 2\xi\omega_S)|_{\orb C} + \omega_S^2|_{\orb C} \\
  &= (\xi^2 + 2\xi\omega_S)(3\xi-c_1(E)) + 3 \kappa_S\\
  &= 2c_1(E)^2 - 3c_2(E) + 4c_1(E)\omega_S + 3 \kappa_S.
\end{align*}
The last step is mere algebraic simplification using $\xi^2 - c_1(E) \xi + c_2(E) = 0.$

For $\lambda_{\orb C}$, we see that in the Grothendieck group of $B$, we have
\[ R{\pi_{\orb C}}_*O_{\orb C} = R{\pi}_* O_{\orb S} + R{\pi}_* \dual E,\]
and by taking degrees, 
\[ \lambda_{\orb C} = \lambda_{\orb S} + c_1(R{\pi}_*\dual E).\]
To compute the last term, we use Grothendieck--Riemann--Roch. We must be careful, however, as $\pi \from \orb S \to B$ is a Deligne--Mumford stack. From \citep[Corollary~5.2]{edidin12:_rieman_roch_delig_mumfor} and using the notation therein, we have
\begin{equation}\label{eq:grr}
  \ch(R{\pi}_*\dual E) = \pi_*\left(\ch\left(t\left(\frac{f^*\dual E}{\lambda_{-1}(N^*_f)}\right)\right) \td(I\orb S)\right).
\end{equation}
Here $f \from I \orb S \to \orb S$ is the inertia stack, $N^*_f$ the conormal bundle of $f$, the operator $\lambda_{-1}$ is $1 - \Lambda^1 +  \Lambda^2 + \dots$, and the operator $t$ is the `twisting operator.' Let us describe these quantities in our context. The inertia stack $I\orb S$ is a disjoint union
\[ I \orb S = \orb S\ \sqcup\ \bigsqcup_{\text{Orbi }p \in \orb S} (\mu_{n_p} \setminus \{\id\}) \times B\mu_{n_p},\]
where $\mu_{n_p}$ is the automorphism group at $p$. The conormal bundle $N_f$ is trivial on the $\orb S$ component of $I\orb S$ and is the cotangent space of $p$ (with the $\mu_{n_p}$ action) on $\{\zeta\} \times B\mu_{n_p}$. Therefore, $\lambda_{-1}N^*_f$ equals $1$ on $\orb S$ and $(2-k(1)-k(-1))$ on $\{\zeta\} \times B{\mu_{n_p}}$. Finally, the twisting operator on the Grothendieck group acts trivially along the $\orb S$ component and sends $k(a)$ to $\zeta^a k(a)$ along $\{\zeta\} \times B{\mu_{n_p}}$. Say $\dual E$ splits as $k(a_p) \oplus k(b_p)$ on the $B\mu_{n_p}$ at $p$. Then, separating the contributions of the different components of $I\orb S$ in \eqref{eq:grr} gives
\begin{align*}
  \ch(R\pi_*\dual E) &= \pi_*\left(\ch(\dual E) \td(\orb S)\right) + \sum_{\substack{\text{Orbi }p \in \orb S\\ 1 \neq \zeta \in \mu_{n_p}}}  \frac{1}{n_p} \cdot \frac{\zeta^{a_p} +   \zeta^{b_p}}{2-\zeta-\zeta^{-1}}\\
  &= \pi_*\left(2-c_1(E)+\frac{c_1(E)^2-2c_2(E)}{2}\right)\left(1-\frac{c_1(\Omega)}{2} + \frac{c_1(\Omega)^2 + c_2(\Omega)}{12}\right) + \sum_{\text{Orbi } p \in \orb S} \chi(p),
\end{align*}
where $\Omega = \Omega_{\orb S/B}$. Noting that $c_1(\Omega) = \omega_{\orb S}$ and $c_2(\Omega) = \delta_{\orb S}$ yields the formula for $\lambda_{\orb C}$.
\end{proof}

\section{Sweeping curves}\label{sec:curve}
In this section, we prove the first half of \autoref{thm:main} by constructing curves with slope $s_g$ that sweep a Zariski open locus in $\o{\orb   H}^3_g$.
\subsection{The even genus case}\label{sec:even_curve}
Let $g = 2n-2$. Consider a general pencil of curves in the linear system $|(3,n)|$ on $\P^1 \times \P^1$. This gives a curve $\P^1 \to \o{\orb H}^3_g$ and such curves sweep a Zariski open locus of $\o{\orb H}^3_g$. Expressed differently, let $S = \P^1 \times \P^1$ and denote by $\pi$ the second projection. Call $\sigma$ and $F$ the class of a constant section and a fiber of $\pi$ respectively. Let $E$ be the vector bundle
\[ E = O(n \sigma + F) \oplus O(n \sigma + F).\]
Construct a triple cover $\phi \from C \to S$ by letting $C \subset \P E$ be general in the linear system $|3\xi - \det E|$ where $\xi$ is the class of $O_{\P E}(1)$. 
\begin{proposition}\label{thm:even_sweeping}
  The above construction yields curves that sweep a Zariski open subset of $\o{\orb H}^3_g$ and have slope $7 + 6/g$.
\end{proposition}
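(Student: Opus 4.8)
The plan is to use the pencil description to set up the family and to check the sweeping property, and then to extract the slope from \autoref{thm:kappa-lambda}, whose hypotheses are easy to verify here because the base $S$ has smooth fibers and no orbifold structure.

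First I would fix the geometry. Two general members of $|O(3,n)|$ on $\P^1 \times \P^1$ meet transversally in $(3,n)\cdot(3,n) = 6n$ points, so blowing these up produces a smooth surface $C$ with a fibration over $B = \P^1$ whose fibers are the members of the pencil, and $C$ maps to $S := \P^1 \times B$ (first coordinate the fixed ruling, second the pencil parameter) as a triple cover $\phi$. Reducible curves in $|O(3,n)|$ have codimension at least two (here $n \geq 2$), so a general pencil consists of irreducible curves; by Bertini the general member is smooth and the finitely many singular fibers are irreducible one-nodal curves, hence all fibers are stable and we obtain a curve $B \to \o{\orb H}^3_g$. Restricting $\phi$ over a fiber $F$ of $\pi\colon S \to B$ gives the $g^1_3$ of the corresponding trigonal curve, which for a general pencil is balanced on every fiber; hence $E = (\phi_* O_C/O_S)^\vee$ restricts to $O_F(n)^{\oplus 2}$ on each $F$. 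The vanishing $\Ext^1_S(O(n\sigma + F), O(n\sigma + F)) = H^1(S, O_S) = 0$ forces $E$ to be globally split, and since $\phi^{-1}$ of a constant section of $\pi$ is the strict transform of a ruling line of $\P^1 \times \P^1$ — a degree-$3$ cover of $\P^1$ by $\P^1$, necessarily balanced — one gets $E = O(n\sigma + F)^{\oplus 2}$, matching the construction. For the sweeping statement, recall that a general trigonal curve of even genus $g = 2n-2$ has Maroni invariant $0$ and hence embeds in $\P^1 \times \P^1$ as a curve of bidegree $(3,n)$; a general pencil through it yields a curve in $\o{\orb H}^3_g$ through the given general point.

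Next I would evaluate the divisor classes. Since $S = \P^1\times\P^1$ has no orbifold structure and $\pi$ has smooth fibers, in \autoref{thm:kappa-lambda} the sum over orbi-points is empty, $\delta_{\orb S} = 0$, $\lambda_{\orb S} = 0$, and $\kappa_{\orb S} = \omega_{S/B}^2 = 0$ because $\omega_{S/B} = -2\sigma$ ($\omega_S = -2\sigma - 2F$, $\pi^*\omega_B = -2F$) and $\sigma^2 = 0$. With $c_1(E) = 2n\sigma + 2F$ one has $c_1(E)^2 = 8n$, $c_2(E) = (n\sigma + F)^2 = 2n$, and $c_1(E)\cdot\omega_{S/B} = -4$, so \autoref{thm:kappa-lambda} gives
\[ \lambda_C = \frac{8n}{2} + \frac{-4}{2} - 2n = 2n - 2 = g, \qquad \kappa_C = 2 \cdot 8n + 4 \cdot (-4) - 3 \cdot 2n = 10n - 16, \]
where $\lambda_C, \kappa_C$ denote the degrees of the Hodge and $\kappa$-classes on $B$. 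As $C \to B$ is a family of stable curves, Mumford's relation $12\lambda = \kappa + \delta$ yields $\delta_C = 12(2n-2) - (10n - 16) = 14n - 8$, and hence the slope is
\[ \frac{\delta_C}{\lambda_C} = \frac{14n - 8}{2n - 2} = \frac{7g + 6}{g} = 7 + \frac{6}{g}. \]
Independently of \autoref{thm:kappa-lambda} one checks $\chi(O_C) = \chi(O_{\P^1 \times \P^1}) = 1$, which forces $\lambda_C = g$, and $e(C) = e(\P^1 \times \P^1) + 6n = 6n + 4$, which forces $\delta_C = 6n + 4g = 14n - 8$.

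The second step is bookkeeping; the substance lies in the first. The delicate points are establishing $E = O(n\sigma + F)^{\oplus 2}$ — so that the family genuinely sweeps a Zariski-dense subset of trigonal curves — and the transversality behind it: general member smooth, general pencil with smooth total space and only irreducible one-nodal fibers. None of this is deep, but it is where care is required, and it rests on the Maroni classification of even-genus trigonal curves together with the cohomology vanishing on $S$ used above.
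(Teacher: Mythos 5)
Your proposal is correct and follows essentially the same route as the paper: sweeping comes from the fact that a generic trigonal curve of even genus $g=2n-2$ is a $(3,n)$ curve on $\P^1\times\P^1$, and the slope comes from \autoref{thm:kappa-lambda} giving $\lambda = g$, $\kappa = 5g-6$ (your $10n-16$), followed by $12\lambda = \kappa + \delta$; your cross-checks via $\chi(O_C)$ and the topological Euler characteristic are consistent extras. One cosmetic point: the vanishing $\Ext^1_S(O(n\sigma+F),O(n\sigma+F)) = 0$ does not by itself force the splitting of $E$ --- what does is that $E$ is balanced on every fiber (so $E \otimes O(-n\sigma)$ is pulled back from $B$ by cohomology and base change) together with your restriction to a constant section --- but the conclusion $E \cong O(n\sigma+F)^{\oplus 2}$ and the rest of the argument stand.
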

\begin{proof}
  Since a generic trigonal curve of genus $g = 2n-2$ embeds in $\P^1\times\P^1$ as a divisor of class $(3, n)$, we get the first assertion. For the slope, using the formulas of \autoref{thm:kappa-lambda}, we get
\[ \lambda = g \text{ and } \kappa = 5g-6.\]
Using $12 \lambda = \kappa + \delta$, we get $\delta = 7g+6$.
\end{proof}

\subsection{The odd genus case}\label{sec:odd_curve}
Let $g = 2n-1$. As in the previous case, let $S = \P^1\times\P^1$, and denote by $\pi$ the second projection. Call $\sigma$ and $F$ the class of a constant section and a fiber respectively. Let $E$ be the vector bundle
\[ E = O(n \sigma + F) \oplus O((n+1)\sigma + 2F).\]
Construct a triple cover $\phi \from C \to S$ by letting $C \subset \P E$ be general in the linear system $|3 \xi - \det E|$ where $\xi$ is the class of $O_{\P E}(1)$. 
\begin{proposition}\label{thm:odd_sweeping}
  The above construction yields curves that sweep a Zariski open subset of $\o{\orb H}^3_g$ and have slope $7 + 20/(3g+1)$.
\end{proposition}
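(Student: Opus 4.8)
The plan is to follow the proof of \autoref{thm:even_sweeping}, now with the balanced odd-genus scroll $\F_1$ playing fibrewise the role that $\F_0 = \P^1 \times \P^1$ plays there. First I would check that the construction sweeps $\o{\orb H}^3_g$. Over a point $b \in B$, the fibre $C_b$ of the family is a triple cover of the fibre $\P^1$ of $\pi$ above $b$; restricting $E$ to that $\P^1$ gives $O(n) \oplus O(n+1)$, so $C_b$ sits inside $\F_1 = \P(O(n) \oplus O(n+1))$ as the divisor cut out by the restriction of $|3\xi - \det E|$, and from $\phi_* O_{C_b} = O \oplus O(-n) \oplus O(-n-1)$ a Riemann--Roch computation gives $g(C_b) = 2n-1$. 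Since a general trigonal curve of odd genus embeds in $\F_1$ (that is, is balanced), it arises in this way, so it is enough to see that a general $C \in |3\xi - \det E|$ on $\P E$ restricts to a general divisor of the relevant class on a general fibre. This is transparent from the splitting
\[ \Sym^3 E \otimes \det \dual E = O((n-1)\sigma) \oplus O(n\sigma + F) \oplus O((n+1)\sigma + 2F) \oplus O((n+2)\sigma + 3F)\]
on $\P^1 \times \P^1$: every summand is globally generated with vanishing $H^1$, so $|3\xi - \det E|$ is base-point free on $\P E$, a general $C$ is a smooth irreducible surface, and the restriction to each fibrewise linear system is surjective. For such a general $C$ the associated family is a morphism $B \to \o{\orb H}^3_g$ (in a general pencil only simple collisions of branch points occur), and its image passes through a general point of $\o{\orb H}^3_g$.

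Next I would compute the slope using \autoref{thm:kappa-lambda}, with $\orb S = S = \P^1 \times \P^1$ and $\pi$ its second projection to $B = \P^1$. There is no orbifold structure, so $\delta_{\orb S} = 0$ and every $\chi(p) = 0$; moreover $\lambda_{\orb S} = \kappa_{\orb S} = 0$, since $R\pi_* O_S = O_B$ and $\omega_{\orb S}$ has class $-2\sigma$ (using $\sigma^2 = F^2 = 0$ and $\sigma \cdot F = 1$). For $E = O(n\sigma + F) \oplus O((n+1)\sigma + 2F)$ one computes $c_1(E) = (2n+1)\sigma + 3F$ and $c_2(E) = 3n+1$, hence $c_1(E)^2 = 12n+6$ and $c_1(E)\,\omega_{\orb S} = -6$. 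Substituting into the formulas of \autoref{thm:kappa-lambda} gives
\[ \lambda = 3n-1 = \frac{3g+1}{2}, \qquad \kappa = 15(n-1),\]
and then the relation $12\lambda = \kappa + \delta$ (valid here because the general pencil is a family of irreducible nodal curves) yields $\delta = 21n+3$. Hence the slope is
\[ \frac{\delta}{\lambda} = \frac{21n+3}{3n-1} = 7 + \frac{10}{3n-1} = 7 + \frac{20}{3g+1},\]
as claimed.

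I expect the only genuine obstacle to be the sweeping assertion: one must invoke the classical fact that a general odd-genus trigonal curve is balanced, and then confirm that the explicit pencils above are general enough to dominate $\o{\orb H}^3_g$ --- the crux being the base-point freeness of $|3\xi - \det E|$ and the surjectivity of the fibrewise restriction maps, both made immediate by the splitting of $\Sym^3 E \otimes \det \dual E$ recorded above. The slope computation itself is routine Chern-class arithmetic on $\P^1 \times \P^1$ together with the already established \autoref{thm:kappa-lambda}, exactly parallel to the even case.
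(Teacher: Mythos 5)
Your proposal is correct and follows essentially the same route as the paper: the sweeping claim rests on the surjectivity of the fibrewise restriction of $|3\xi - \det E|$, which both you and the paper deduce from the splitting of $\Sym^3 E \otimes \det \dual{E}$ (the paper phrases it as the vanishing of $H^1(\Sym^3 E \otimes \det \dual{E} \otimes O(-F))$, which is what your global-generation remark amounts to on $\P^1 \times \P^1$), combined with the fact that a general odd-genus trigonal curve is balanced. Your slope computation via \autoref{thm:kappa-lambda} and $12\lambda = \kappa + \delta$, giving $\lambda = 3n-1$, $\kappa = 15n-15$, $\delta = 21n+3$, matches the paper exactly.
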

\begin{proof}
  Let $b \in B$ be a point. The main observation is that the map of linear systems
  \begin{equation}\label{eqn:surjects}
    H^0(O_{\P E}(3\xi - \det E)) \to H^0(O_{\P E_b}(3\xi - \det     E_b))
\end{equation}
is surjective. Indeed, it suffices to show that
\[ H^0(\Sym^3 E \otimes \det \dual E) \to H^0(\Sym^3 E_b \otimes \det \dual E_b)\]
is surjective. The cokernel of this map is $H^1(\Sym^3E \otimes \det\dual E \otimes O(-F))$. Since 
\[\Sym^3 E \otimes \det \dual E = O((n-1)\sigma) \oplus O(n\sigma+F) \oplus O((n+1)\sigma + 2F) \oplus O((n+2)\sigma+3F),\]
it follows that $H^1(\Sym^3(E) \otimes \det \dual E \otimes O(-F)) = 0$.

Note that away from a locus of codimension two, a divisor in $|3\xi - \det E_b|$ gives a point $\phi \from C \to \P^1$ of $\o{\orb H}^3_g$. Coupled with the surjection \eqref{eqn:surjects}, this implies that a general divisor in $|3\xi - \det E|$ gives a curve in $\o{\orb H}^3_g$. The surjection \eqref{eqn:surjects} also implies that these curves cover a Zariski open subset of $\o{\orb H}^3_g$. 

It remains to check the slope. By \autoref{thm:kappa-lambda}, we get
\[ \lambda = 3n - 1 \text{ and } \kappa = 15n-15.\]
Using $12 \lambda = \kappa + \delta$, we get $\delta = 21n+3$. Hence the slope is 
\[\frac{\delta}{\lambda} =  \frac{21n+3}{3n-1} = 7 + \frac{20}{3g+1}.\]
\end{proof}

\section{Effective divisors}\label{sec:divisor}
In this section, we prove the second half of \autoref{thm:main}. The idea is to exhibit an effective divisor with class proportional to $(s_g \lambda - \delta)$.

\subsection{The even genus case}\label{sec:divisor_even}
Let $\mu \subset \o{\orb H}^3_g$ be the closure of the locus in $\orb H^3_g$ consisting of $[\phi \from C \to \P^1]$ for which the rank two bundle $\phi_* O_C/O_{\P^1}$ is unbalanced. It is easy to see that $\mu$ is a divisor. It is called the \emph{Maroni divisor}. The divisor $\mu$ plus an effective sum of boundary divisors is a positive multiple of $(7+6/g) \lambda - \delta$. To prove this, we compute the class of $\mu$
\[ \mu = a \lambda - b \delta - c H - \sum c_i(g_1,g_2) \Delta_i(g_1,g_2),\]
by a series of test curve calculations.

\subsubsection*{Setup of test curve calculations} All the test curves are of the following format:
\begin{enumerate}
\item Start with a fibration of rational (orbi)-curves $\pi \from \orb S \to B$ 
\item Pick a sufficiently $\pi$-positive rank two bundle $E$ on $\orb S$. 
\item Construct $\phi \from \orb C \to \orb S$ by picking a generic
  divisor in $|O_{\P E}(3)\otimes \dual{\det E}|$.
\end{enumerate}
Thus, the test-curve is specified by $\orb S \to B$ and $E$. In all the test-curves, we have $B = \P^1$. The family $\orb S \to B$ is one of the following four:
\begin{enumerate}
\item $\orb S_0 = \P^1 \times B$. It is used to compute the
  coefficients of $\lambda$ and $\delta$.
\item $\orb S_1$ is obtained by blowing up $\P^1 \times B$ at a point. It is used to compute the coefficients of $\Delta_1(g_1,g_2)$, $\Delta_4(g_1,g_2)$, $\Delta_6(g_1,g_2)$ and $H$.
\item $\orb S_2$ is obtained by blowing up $\P^1 \times B$ along a $k[\epsilon]/\epsilon^2$ transverse to a fiber and replacing the resulting $A_2$ singularity $k[x,y]/(xy-t^2)$ by the orbi-node $[k[u,v]/(uv-t) /
  \mu_2]$. It is used to compute the coefficients of $\Delta_2(g_1,g_2)$ and $\Delta_5(g_1,g_2)$.
\item $\orb S_3$ is obtained by blowing up $\P^1 \times B$ along a   $k[\epsilon]/\epsilon^3$ transverse to a fiber and replacing the   resulting $A_2$ singularity $k[x,y]/(xy-t^3)$ by the orbi-node   $[k[u,v]/(uv-t) / \mu_3]$. It is used to compute the coefficients of   $\Delta_3(g_1,g_2)$.
\end{enumerate}
For all $\orb S_i$, denote by $s$ the preimage of a constant section from $\P^1 \times B$ not passing through the center of the blowup. For $\orb S_i$ with $i \geq 1$, let $F_\infty$ (resp. $F_0$) be the component of the reducible fiber that does not (resp. does) intersect $s$.  Note that 
\[ s^2 = 0, \quad F_0 \cdot F_\infty = \frac{1}{i} \text{ and } F_0^2 = F_\infty^2 = \frac{-1}{i}.\]
Also, 
\[\omega_{\orb S_0/B} = -2s \text{ and }\omega_{\orb S_i/B} =
-2s + i F_\infty \text{ for $i \geq 1$.}\]

\autoref{table:test-curves-mu} lists various test-curves, their intersection with the higher boundary divisors and the value of the \emph{Residual}, defined by
\[ \text{Residual} = (7g+6)\lambda - g \delta - 2(g-3)\mu.\] The divisors $\lambda$ and $\delta$ are computed by applying \autoref{thm:kappa-lambda} and using $12 \lambda = \kappa + \delta$ for the induced family $\pi \from C \to B$, where $C$ is the coarse space of $\orb C$. The orbifold correction term in the expression for $\lambda$ is $0$ for $\orb S_1$, $0$ for $\orb S_2$ and $-2/9$ for $\orb S_3$. These curves intersect the higher boundary divisors in general points (\autoref{thm:generic_boundary}) and hence the central fiber does not contribute to $\mu$. Note that $\mu$ is $0$ on all the test-curves except the second, where it is $(\deg L - \deg M)$. 

We take $g = 2n-2$, and $L$, $M$ to be sufficiently positive line bundles pulled from $B$.
\renewcommand{\arraystretch}{2}
\begin{longtable}{| l | p{.35\textwidth} | p{.17\textwidth} |                     p{.35\textwidth} |}
\caption{Test-curve calculations for the class of $\mu$ (even $g$)}\\
\hline
$\orb S$ & $E$ & Intersection with higher boundary& Residual\\
\hline
\endhead
\hline
\endfoot
$\orb S_0$ & $L(n)^{\oplus 2}$& --- & 0\\

$\orb S_0$ & 
Generic extension \newline 
$L((n-1)s) \into E \onto M((n+1)s).$ & 
---&
$0$\\

$\orb S_1$ &
$L(ns - bF_\infty)\oplus M(ns-bF_\infty)$ \newline
with $1 \leq b \leq n-1$ &
$\Delta_1(g_1,g_2) = 1$\newline
$g_1 = 2(n-b)-2$\newline
$g_2 = 2b - 2$ &
$\frac{3}{2}g_1g_2$
\\

$\orb S_1$ &
${L(ns - bF_\infty)}\oplus {M(ns - (b-1)F_\infty)}$\newline
with $2 \leq b \leq n-1$&
$\Delta_1(g_1,g_2) = 1$\newline
$g_1 = 2(n-b)-1$\newline
$g_2 = 2b - 3$ &
$\frac{1}{2} (3g_1g_2+g_1+g_2-1)$
%$2(3b-4)n - 6b^2+6b+2$
\\

$\orb S_1$ &
Generic extension ($ 2 \leq b \leq 2n-2$)\newline
$L(ns-bF_\infty) \into E \onto M(ns)$&
$\Delta_4(g_1,g_2) = 1$\newline
$g_1 = 2n-b-2$\newline
$g_2 = b-1$&
$\frac{1}{2}g_2(g_1g_2+g_2^2+5g_1-1)$
%$(b-1)(nb+4n-4b-4)$
\\

$\orb S_1$ &
Extension (with $n \leq b \leq 2n-2$)\newline
$L(ns-bF_\infty) \into E \onto M(ns)$ \newline
such that $E = O \oplus O(2n-b)$ on $F_0$&
$\Delta_6(g_1,g_2) = 1$\newline
$g_1 = 2n-b-1$\newline
$g_2 = b-1$&
$\frac{1}{2}g_2(g_1g_2+g_2^2+5g_1-g_2-6) + g$
%$(b^2+3b-2)n -4b^2+2$
\\

$\orb S_1$ &
$b = 2n -1$ in the above case.
&
$H = 2$&
$\frac{1}{2}g(g-2)(g+1)$
\\

$\orb S_2$ &
$L(ns - bF_\infty) \oplus M(ns-(b-1)F_\infty)$ &
$\Delta_2(g_1,g_2) = 1$\newline
$g_1 = 2n-b-1$\newline
$g_2 = b - 2$&
$3g_1g_2$
\\

$\orb S_2$ &
Generic extension (with $b$ odd)\newline
$L(ns - bF_\infty) \into E \onto M(ns)$ &
$\Delta_5(g_1,g_2) = 1$\newline
$g_1 = 2n-\frac{b+3}{2}$\newline
$g_2 = \frac{b - 1}{2}$ &
$g_2^3+g_1g_2^2-2g_2^2+4g_1g_2-g_1-g_2$
\\

$\orb S_3$ &
$L(ns - bF_\infty) \oplus M(ns-(b-1)F_\infty)$\newline
with $b \equiv 2 \pmod 3$&
$\Delta_3(g_1,g_2) = 1$\newline
$g_1 = 2n-\frac{2b+2}{3}$\newline
$g_2 = \frac{2b - 4}{3}$&
$\frac{9}{2}g_1g_2-g_1-g_2$
\\

$\orb S_3$ &
$L(ns - bF_\infty) \oplus M(ns-(b-2)F_\infty)$\newline
with $b \equiv 1 \pmod 3$&
$\Delta_3(g_1,g_2) = 1$\newline
$g_1 = 2n-\frac{2b+1}{3}$\newline
$g_2 = \frac{2b - 5}{3}$&
$\frac{1}{2}(9g_1g_2-g_1-g_2-3)$
% $2(3b-8)n - 2b^2+4b+2$
\label{table:test-curves-mu}
\end{longtable}
\begin{remark}\label{rem:even_adjustments}
  \autoref{thm:kappa-lambda} along with $\delta = 12\lambda - \kappa$ gives $\delta$ for the family $\pi \from C \to B$, where $C$ is the coarse space of $\orb C$. In the following cases, however, the special fiber of $C \to B$ has unstable rational tails, which must be contracted to get a family of Deligne--Mumford stable curves. We must accordingly adjust the value of $\delta$.
\begin{enumerate}
\item When the special fiber is in $\Delta_4(g_1, g_2)$, its rational tail must be contracted, which is a $-1$ curve on $C$. Hence we must subtract $1$ from the $\delta$ computed for $\pi \from C \to B$.
\item When the special fiber is in $\Delta_5(g_1, g_2)$, its rational tail must be contracted, which removes an $A_1$ singularity from $C$. Hence we must subtract $2$ from the $\delta$ computed for $\pi \from C \to B$.
\item When the special fiber is in $\Delta_6(g_1, g_2)$, its two rational tails must be contracted, which are both $-1$ curves on $C$. Hence we must subtract $2$ from the $\delta$ computed for $\pi \from C \to B$.
\item To get a central fiber in $H$ (by setting $b = 2n-1$ in the fifth row), we must contract $F_\infty$ and the two rational curves over it. Furthermore, to get a Deligne--Mumford stable model, we must also contract the rational tail on the remaining curve. Thus, in total, we must subtract 3 from the $\delta$ computed for $\pi \from C \to B$.
\end{enumerate}
\end{remark}
It remains to argue that the curves constructed in \autoref{table:test-curves-mu} indeed intersect the boundary divisors in generic points. This is immediate for $\Delta$. The following proposition proves it for the higher boundary.
\begin{proposition}\label{thm:generic_boundary}
  The special fiber of the family of curves in \autoref{table:test-curves-mu} is generic in the boundary divisor indicated in the third column.
\end{proposition}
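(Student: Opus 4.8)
The plan is to analyze each test-curve one boundary divisor at a time, showing that the special fiber of the family varies in a sufficiently large-dimensional family inside the boundary divisor to be generic there. The unifying mechanism is the surjectivity of restriction maps on the relevant linear systems, exactly as in the proof of Proposition \ref{thm:odd_sweeping}. Concretely, fix one of the families $\orb S_i \to B$ and the bundle $E$ from a given row of Table \ref{table:test-curves-mu}, and let $b_0 \in B$ be the point over which the reducible fiber $F_0 \cup F_\infty$ sits. The special fiber of $\orb C \to B$ is the triple cover of $\orb S_{i,b_0}$ cut out by the restricted section of $O_{\P E}(3) \otimes \dual{\det E}$. First I would compute $H^1$ of $\Sym^3 E \otimes \det \dual E \otimes O(-F_{b_0})$ on $\orb S_i$, where $F_{b_0}$ is the scheme-theoretic fiber; since $E$ is a sum of line bundles pulled back from $\P^1 \times B$ twisted by $s$ and $F_\infty$, the symmetric cube splits as a sum of four line bundles, and in each row the twists are positive enough along the fibers that this $H^1$ vanishes. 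This gives surjectivity of $H^0(\orb S_i, \Sym^3 E \otimes \det\dual E) \to H^0(\orb S_{i,b_0}, \Sym^3 E_{b_0} \otimes \det \dual E_{b_0})$, hence the special triple cover $\orb C_{b_0} \to \orb S_{i,b_0}$ is a \emph{general} element of the linear system on the nodal (or orbi-nodal) rational curve $\orb S_{i,b_0}$.

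Next I would identify what ``general element of that linear system'' means for each $\orb S_i$ and match it against the generic point of the corresponding boundary divisor described in Figure \ref{fig:boundary}. For $\orb S_1$ the central fiber is $F_0 \cup F_\infty$, two $\P^1$s meeting at a node; a general section of $O_{\P E}(3) \otimes \dual{\det E}$ restricted there is simple branched away from the node on each component, and étale or suitably ramified over the node according to the splitting type of $E$ on $F_0$ and $F_\infty$ — this is precisely what distinguishes $\Delta_1$, $\Delta_4$, $\Delta_6$ and $H$, and the genus split $(g_1,g_2)$ is forced by the degrees. For $\orb S_2$ and $\orb S_3$ the central base is an orbi-nodal rational curve with a $\mu_2$ (resp.\ $\mu_3$) node, and the cover over the orbi-node is dictated by the action of the stabilizer on $E$; a general section then produces the node-plus-$\P^1$ (for $\Delta_2, \Delta_5$) or triple-ramification (for $\Delta_3$) configuration. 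In every case, once surjectivity is established, the cover over each irreducible component of the central fiber is a general trigonal curve of the stated genus (again by the argument of Proposition \ref{thm:odd_sweeping} applied component-wise), and the attaching data is rigid, so the special fiber is the generic point of the indicated boundary divisor.

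The main obstacle I expect is the bookkeeping at the node of the central fiber: one must check that a \emph{general} section of the degree-three line bundle on $\P E$ restricts over the (orbi-)node to the \emph{expected} local model of an admissible cover — e.g.\ for $H$ one needs the cover over $F_\infty$ to be a connected hyperelliptic double-cover-plus-section configuration rather than something more degenerate, and for $\Delta_6$ one needs two separate rational bridges rather than one. This amounts to understanding, for each prescribed splitting type $E|_{F_0} = O \oplus O(k)$, which local cover types occur for a general member of $|O_{\P E}(3) \otimes \dual{\det E}|$ near the node; the key input is that $O_{\P E}(3) \otimes \dual{\det E}$ is globally generated in a neighborhood of the fiber over $b_0$ (a consequence of the same $H^1$ vanishing, after twisting down by the point), so the branch behavior over the node and over nearby points of $F_0, F_\infty$ is as generic as the numerics allow. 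I would also invoke the remarks on unstable rational tails (Remark \ref{rem:even_adjustments}) to confirm that, after stabilization, the special fiber lands in the stated divisor and not on a deeper stratum. Carrying this out row by row is routine but tedious; the conceptual content is entirely in the single $H^1$-vanishing computation per row plus the local analysis at the (orbi-)node.
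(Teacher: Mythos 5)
Your key step---surjectivity of the restriction map $H^0(\orb S_i, \Sym^3 E \otimes \dual{\det E}) \to H^0(\Sym^3 E \otimes \dual{\det E}|_0)$ via an $H^1$-vanishing---breaks down in exactly the rows that compute the coefficients of $\Delta_4$, $\Delta_5$, $\Delta_6$ and $H$, and that is where the real content of the proposition lies. First, in those rows $E$ is an extension, not a direct sum of line bundles, so your claimed splitting of $\Sym^3 E \otimes \dual{\det E}$ into four line bundles is not available. More seriously, surjectivity is actually \emph{false} there: on $F_\infty$ the extension splits (the relevant $\Ext^1$ on $\P^1$ vanishes), so $E|_{F_\infty} \cong O(b) \oplus O$, and hence $\Sym^3 E \otimes \dual{\det E}$ restricted to $F_\infty$ contains the summand $O(-b)$ with $b \geq 2$. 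Thus $h^1$ of the special fiber is positive while it vanishes on nearby fibers; since $\chi$ is constant in the flat family, $h^0$ jumps at $0$, and the image of the restriction map has dimension at most $h^0$ of a general fiber --- a proper subspace. No choice of $L$, $M$ ``sufficiently positive'' (these are pulled back from $B$, hence trivial on fibers) can repair this. So in the unbalanced rows the special fiber is not a general member of the linear system on $F_0 \cup F_\infty$, and your argument gives no control over which covers of the special fiber actually occur.

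The paper avoids this by arguing in the opposite direction: it takes a \emph{general} point $[\phi \from C \to P]$ of the boundary divisor together with a general one-parameter smoothing $\phi \from \orb C \to \orb S$ over a germ, and shows that the associated bundle $E$ on $\orb S$ admits a nowhere vanishing map from $L(ns - bF_\infty)$, i.e., that this smoothing is realized inside the test family; density then follows without ever needing the restriction map to be surjective. For $\Delta_1$, $\Delta_2$, $\Delta_3$ this is the cohomology-and-base-change argument you have in mind (both branches balanced, so $H^1(G|_0)=0$ for $G = \dual{L} \otimes E$), but for $\Delta_4$, $\Delta_5$, $\Delta_6$, $H$ it requires a genuinely different limit argument: one passes to the crimped cover $C' \to \P^1$ obtained by contracting the rational and hyperelliptic tails, contracts $F_\infty$ in the surface, and shows the flat limit of sections of $\P G$ is a constant section of the balanced $\P G'|_0$, hence nowhere vanishing. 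Separately, even in the rows where your surjectivity does hold, you would still need to know that the splitting type of $E|_0$ listed in \autoref{table:test-curves-mu} is the one attained at the \emph{generic} point of the boundary divisor (a general section of a fixed bundle only dominates the locus of covers with that bundle); the paper states and uses this fact, and your sketch does not address it.
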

\begin{proof}
  Let $\pi \from \orb S_i \to B$ (for $i \geq 1$) and $E$ be as
  indicated in \autoref{table:test-curves-mu}. Let $0 \in B$
  correspond to the special fiber. Let $V \subset H^0(\Sym^3 E \otimes
  \dual {\det E}|_0)$ be the image of $H^0(\Sym^3 E \otimes \dual{\det
  E})$. We must prove that the trigonal curves given by (generic)
  elements of $V$ make up a dense subset in the corresponding boundary
  divisor. By choosing $L$ and $M$ sufficiently ample, we are reduced
  to proving this statement where $V$ is the image of $\pi^*\pi_*
  (\Sym^3 E \otimes \dual{\det E})$ in $H^0(\Sym^3 E \otimes \dual{\det E}|_0)$.
  The question is thus local around $0$.
  
  Consider a general point $[\phi \from C \to P]$ of
  $\Delta_i(g_1,g_2)$ with $P = F_0 \cup F_\infty$ (the case of $H$ is handled in this guise by letting $i = 6$ and $g_1 = 0$.) Note that line bundles on $P$ have integral degree; rank two bundles split; and for generic $[\phi \from C \to P]$ the splitting of $E$ is indeed as obtained in \autoref{table:test-curves-mu}. Let $(B,0)$ be a germ of a smooth curve and $\pi \from \orb S \to B$ with $\phi \from \orb C \to \orb S$ a smoothing of $\phi \from C \to P$ with smooth $\orb S$, and where the generic fiber is a smooth, balanced trigonal curve. We show that such a smoothing can be obtained in the curves in \autoref{table:test-curves-mu}.

  Let $s \from B \to \orb S$ be a section intersecting $F_0$. Set $E =
  \dual{(\phi_* O_{\orb C}/O_{\orb S})}$ and $L = O(nS -
  bF_\infty)$. To get such a smoothing in
  \autoref{table:test-curves-mu}, $E$ must be expressible as an
  extension
  \[ 0 \to L \to E \to M \to 0.\]
  It suffices to show that there is a nowhere vanishing map $L \to E$. Set $G = \dual L \otimes E$. For the case of $\Delta_1$, $\Delta_2$, and $\Delta_3$, where both branches of $\phi \from C \to P$ are connected, and hence balanced by genericity, we have $H^1(G|_0) = 0$. Therefore, in this case, $\pi^*\pi_*G \to H^0(G|_0)$ is surjective. Since $G|_0$ has a nowhere vanishing section, we conclude that $G$ has a nowhere vanishing section.

The case of $\Delta_4$, $\Delta_5$ and $\Delta_6$ is a bit subtle. Think of a section of $G$ as a divisor of class $\sigma = O_{\P G}(1)$ in $\P G$; a nowhere vanishing section corresponds to a divisor that does not contain a fiber of $\P G \to \orb S$. On a generic fiber $\P^1_t$ of $\orb S \to B$, the class $\sigma$ is that of a constant section of $\P G_t \cong \P^1 \times \P_t^1$. Consider a family of divisors $X_t \subset \P G_t$ in class $\sigma$ and let $X_0 \subset \P G|_0$ be the flat limit. Then $X_0$ is a Cartier divisor in $\P G|_0$ of class $\sigma - aN$, where $N = \P G|_{F_\infty}$ and $a \in \Z$. It suffices to show that $a = 0$ and $X_0$ does not contain any fibers of $\P G|_0 \to P$. 

 Let the preimage of $C \to P$ over $F_\infty$ be $C_1 \sqcup C_2$ where $C_1 \isom F_\infty$ is degree one and $C_2 \to F_\infty$ is degree two. Note that $N \cdot C_2 \neq 0$ but $\sigma \cdot C_2 = 0$. Hence $a = 0$ if $X_0$ does not intersect $C_2$. Therefore it suffices to show that $X_0$ does not intersect $C_2$ and does not contain any fibers of $\P G|_{F_0} \to F_0$. 

Consider the highly singular trigonal curve $C' \to \P^1$ obtained by contracting the components $C_1$ and $C_2$ of $C$ as follows. In the case where $C$ is an orbi-curve, we first pass to the coarse space. The rational component $C_1$ is contracted to the (smooth) point of attachment. The hyperelliptic component $C_2$ of genus $g_2$ is contracted to create a $A_{2g_2+1}$ singularity for $\Delta_4$ and $\Delta_6$ and a $A_{2g_2}$ singularity for $\Delta_5$ (see \autoref{fig:crimping}). Note that there is a unique way to carry out such crimping while keeping a map to $\P^1$. It is not hard to check that for a generic central fiber $[\phi \from C \to P]$, the resulting $C' \to \P^1$ is a balanced triple cover.
\begin{figure}[ht]
  \subfigure[$\Delta_4$ or $\Delta_6$]{
      \begin{tikzpicture}[yscale=.15, xscale=.6, thick]
        \path[dotted] (-1.5,-8) rectangle (3.5,4);
        \begin{pgflowlevelscope}{\pgftransformrotate{20}}
          \draw [smooth, tension=1.5, dotted]
          plot coordinates{(-1,1) (0,1)}
          plot coordinates{(-1,0) (0,0)}
          plot coordinates{(-1,-1) (0,-1)};
          \draw [smooth, tension=1.5]
          plot coordinates{(0,1) (1,1)}
          plot coordinates{(0,0) (1,0)}
          plot coordinates{(0,-1) (1,-1)};
          \draw (-1,-5) -- (.7, -5);
          \path (-.5,-2.7) edge [->, thin] (-.5,-4.2);
        \end{pgflowlevelscope}
        \begin{pgflowlevelscope}{ 
            \pgftransformxscale{-1}
            \pgftransformxshift{-1cm}           
            \pgftransformrotate{20}}
          \draw [smooth, tension=2]
          plot coordinates{(1.1,1) (0, .2) (1.1,0)}
          plot coordinates{(-1,.8) (0, .2) (-1,0)}
          plot coordinates{(1,-1) (-1, -1)};
          \draw (-1,-5) -- (.7, -5);
          \path (-.5,-2.7) edge [->, thin] (-.5,-4.2);
        \end{pgflowlevelscope}
        \draw (3.2,0) node  {$g_2$};
        \begin{scope}[xshift=6cm, yshift=-1.5cm]
          \draw (-2.5,0) edge [thin,->,decorate,decoration={snake, amplitude=.05cm}] (-1.5,0);
          \draw [smooth, tension=1.5, dotted]
          plot coordinates{(-1,1) (0,1)}
          plot coordinates{(-1,0) (0,0)}
          plot coordinates{(-1,-1) (0,-1)};        
          \draw [smooth, tension=0.5]
          plot coordinates{(0,1) (.5,.5) (1,1)}
          plot coordinates{(0,0) (.5,.5) (1,0)}
          plot coordinates{(0,-1) (1,-1)};
          \path (0,-2.7) edge [->, thin] (0,-4.2);
          \draw (1,3) node {\tiny $y^2=x^{2g_2+2}$};
          \draw (-1,-5) -- (1, -5);
        \end{scope}
      \end{tikzpicture}
    }\qquad
  \subfigure[$\Delta_5$]{
      \begin{tikzpicture}[yscale=.15, xscale=.6, thick]
        \path[dotted] (-1.5,-8) rectangle (3.5,4);
        \begin{pgflowlevelscope}{\pgftransformrotate{20}}
          \draw [smooth, tension=1.5, dotted]
          plot coordinates{(-1,1) (0,1)}
          plot coordinates{(-1,0) (0,0)}
          plot coordinates{(-1,-1) (0,-1)};
          \draw [smooth, tension=1.5]
          plot coordinates{(0,1) (.8,.5) (0,0)}
          plot coordinates{(0,-1) (1,-1)};
          \draw (-1,-5) -- (.7, -5);
          \path (-.5,-2.7) edge [->, thin] (-.5,-4.2);
        \end{pgflowlevelscope}
        \begin{pgflowlevelscope}{ 
            \pgftransformxscale{-1}
            \pgftransformxshift{-1cm}           
            \pgftransformrotate{20}}
          \draw [smooth, tension=2]
          plot coordinates{(-1,.8) (1, .2) (-1,0)}
          plot coordinates{(1,-1) (-1, -1)};
          \draw (-1,-5) -- (.7, -5);
          \path (-.5,-2.7) edge [->, thin] (-.5,-4.2);
        \end{pgflowlevelscope}
        \draw (3.2,0) node  {$g_2$};
        \begin{scope}[xshift=6cm, yshift=-1.5cm]
          \draw (-2.5,0) edge [thin,->,decorate,decoration={snake, amplitude=.05cm}] (-1.5,0);
          \draw [smooth, dotted]
          plot coordinates{(-1,1) (0,1)}
          plot coordinates{(-1,0) (0,0)}
          plot coordinates{(-1,-1) (0,-1)};        
          \draw [smooth, tension=2]
          plot coordinates{(0,1) (.7,.5) (.7,.5) (0,0)}
          plot coordinates{(0,-1) (1,-1)};
          \path (0,-2.7) edge [->, thin] (0,-4.2);
          \draw (1,3) node {\tiny $y^2=x^{2g_2+1}$};
          \draw (-1,-5) -- (1, -5);
        \end{scope}
      \end{tikzpicture}
    }
  \caption{Crimping $C \to P$ to get a singular $C' \to \P^1$}
  \label{fig:crimping}
\end{figure}

  Let $\orb S \to \P^1 \times B$ be the contraction of $F_\infty$ and $\orb C \to \orb C'$  the contraction of the curves in $\orb C$ over $F_\infty$. Then $\orb C' \to \P^1 \times B$ is a triple cover; in fact, it is the unique extension to $\P^1 \times B$ of the cover $\orb C \setminus \phi^{-1}(F_\infty) \to \orb S \setminus F_\infty$. The central fiber of $\orb C' \to \P^1 \times B$ is $C' \to \P^1$, by construction. Define the analogues $E'$ and $G'$ of $E$ and $G$. Consider the flat limit $X'_0$ of $X_t$ in $\P G'|_0$. To show that $X_0$ does not intersect $C_2$ and does not contain fibers of $\P G|_{F_0}\to F_0$, it suffices to show that $X'_0$ stays away from the singularity of $C'$ and does not contain fibers of $\P G'|_0 \to \P^1$. Since $G'$ is balanced, the limit $X'_0$ is simply a constant section of $\P G'|_0 \cong \P^1 \times \P^1$, which will stay away from the singularity of $C'$ for a generic choice of the family $X_t$. The proof is thus complete.
\end{proof}

\begin{proposition}\label{thm:class_mu}
  Let $g \geq 4$ be even. In the rational Picard group of $\o{\orb H}^3_g$, we have
  \[ 2(g-3)[\mu] = (7g+6)\lambda - g \delta - c H - \sum c_i(g_1,g_2)\Delta_i(g_1,g_2),\]
where $c \geq 0$ and $c_i \geq 0$. In particular, a positive multiple of $(7g+6)\lambda - g \delta$ is equivalent to an effective divisor.
\end{proposition}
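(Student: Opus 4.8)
The plan is to pin down every coefficient in
\[ 2(g-3)[\mu] = a\lambda - b\delta - cH - \sum_i c_i(g_1,g_2)\,\Delta_i(g_1,g_2), \]
a legitimate Ansatz because $\lambda$, $\delta$, $H$ and the $\Delta_i(g_1,g_2)$ form a basis of $\Pic_\Q(\o{\orb H}^3_g)$, by pairing both sides with the test curves of \autoref{table:test-curves-mu}. Equivalently, setting $\text{Residual} = (7g+6)\lambda - g\delta - 2(g-3)[\mu]$, I would show that $\text{Residual}$ is a nonnegative combination of $H$ and the $\Delta_i(g_1,g_2)$; the proposition then follows at once, since $(7g+6)\lambda - g\delta = 2(g-3)[\mu] + \text{Residual}$ is visibly effective once $g\geq 4$. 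On each test curve --- built, as in the setup of the test-curve calculations, from a rational (orbi-)fibration $\orb S_i\to\P^1$, a sufficiently positive rank-two bundle $E$, and a generic $\orb C\subset\P E$ in $|O_{\P E}(3)\otimes\dual{\det E}|$ --- the classes $\lambda$ and $\delta$ are evaluated by \autoref{thm:kappa-lambda} together with $12\lambda=\kappa+\delta$ for the induced coarse family, and $[\mu]$ counts, with multiplicity, the fibres over which $E$ restricts to an unbalanced bundle.

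First I would settle the $\lambda$- and $\delta$-coefficients using the two rows over $\orb S_0=\P^1\times B$. Since $\orb S_0\to B$ has no reducible fibres and generic fibre a smooth balanced trigonal curve, such a curve is disjoint from $H$ and from every $\Delta_i(g_1,g_2)$, so $\text{Residual}$ pairs with it only through $\lambda$, $\delta$, $[\mu]$. For $E=L(n)^{\oplus 2}$ every fibre is balanced and $[\mu]$ restricts to $0$; for a generic extension $L((n-1)s)\into E\onto M((n+1)s)$ the fibre is balanced except over the $\deg L-\deg M$ points where the extension splits, so $[\mu]$ restricts to $\deg L-\deg M$. In both cases a computation with \autoref{thm:kappa-lambda} gives $\text{Residual}\cdot[\text{curve}]=0$, and since the $(\lambda,\delta)$-bidegrees of the two curves are linearly independent (the first is proportional to $(g,7g+6)$, while on the second $(7g+6)\lambda-g\delta$ has degree $2(g-3)(\deg L-\deg M)\neq 0$), this vanishing forces the $\lambda$- and $\delta$-components of $\text{Residual}$ to be zero; that is, $a=7g+6$, $b=g$ and $\text{Residual}=cH+\sum_i c_i(g_1,g_2)\,\Delta_i(g_1,g_2)$.

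Next I would work through the remaining rows, built over $\orb S_1$, $\orb S_2$, $\orb S_3$. Each such family has a single reducible central fibre which, by \autoref{thm:generic_boundary}, maps to a generic point of exactly the boundary divisor named in the third column of \autoref{table:test-curves-mu} --- hence to no other higher boundary divisor and not into $\mu$ --- with the intersection multiplicity recorded there ($1$ in every case except $2$ for $H$). Since $\text{Residual}$ involves only $H$ and the $\Delta_j$, pairing it with the curve isolates a single coefficient: the value equals $c\cdot(H\cdot[\text{curve}])$ or $c_i(g_1,g_2)\cdot(\Delta_i(g_1,g_2)\cdot[\text{curve}])$, and is computed from \autoref{thm:kappa-lambda} and $\delta=12\lambda-\kappa$ using the orbifold correction term in the $\lambda$-formula ($0$ for $\orb S_1$ and $\orb S_2$, $-2/9$ for $\orb S_3$) and the stable-model adjustments of \autoref{rem:even_adjustments} for the rows ($\Delta_4$, $\Delta_5$, $\Delta_6$, $H$) whose central fibre carries unstable rational tails. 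This reproduces the last column of the table; two rows are needed for each of $\Delta_1$ and $\Delta_3$ because $g_1$ and $g_2$ share a parity there, and letting $b$ range over its allowed values sweeps out every admissible pair $(g_1,g_2)$. The final step is to check that each entry of that column is $\geq 0$ on the range of $(g_1,g_2)$ prescribed in \autoref{fig:boundary}: most are manifestly nonnegative (e.g.\ $\tfrac32 g_1g_2$, $3g_1g_2$, $\tfrac12 g(g-2)(g+1)$), and one verifies the remaining ones --- notably the $\Delta_5$ and $\Delta_6$ polynomials --- directly, exploiting the relation $g_1+g_2\in\{g,g-1,g-2\}$ and the positivity constraints on $g_1,g_2$ (the few extreme cases being checked by hand). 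This gives $c\geq 0$ and all $c_i(g_1,g_2)\geq 0$, completing the proof.

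The real engine is \autoref{thm:generic_boundary}: the fact that families of the very rigid shape used here already sweep generic points of every higher boundary divisor is what lets us read off each coefficient from one intersection number, circumventing the need to identify which boundary points lie in $\mu$. Granting that, what remains is organized bookkeeping, whose only delicate ingredients --- the orbifold Grothendieck--Riemann--Roch corrections absorbed into \autoref{thm:kappa-lambda}, and the rational-tail corrections of \autoref{rem:even_adjustments} --- are already in hand. I expect the positivity check on the table entries to be the most error-prone part, though it is entirely elementary.
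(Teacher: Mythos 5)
Your proposal is correct and takes essentially the same approach as the paper: its proof likewise intersects the claimed relation with the test curves of \autoref{table:test-curves-mu}, identifies $c$ and the $c_i(g_1,g_2)$ with the entries of the Residual column (relying on \autoref{thm:generic_boundary} and the adjustments of \autoref{rem:even_adjustments}), and checks that those entries are nonnegative for $g \geq 4$. Your explicit argument that the two $\orb S_0$ rows pin down the $\lambda$- and $\delta$-coefficients is just a spelled-out version of what the paper leaves implicit.
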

\begin{proof}
  Intersect both sides of the equation by each of the test-curves in \autoref{table:test-curves-mu}. The coefficients $c$ and $c_i(g_1,g_2)$ are precisely the entries in the ``Residual'' column. It is easy to verify that these entries are non-negative for the allowed values of $g_1$, $g_2$, given $g \geq 4$.
\end{proof}
\begin{corollary}\label{thm:sweeping_slope_sharp_even}
  Let $g \geq 4$ be even. Let $B \to \o T_g$ be a curve such that a generic point of $B$ corresponds to a smooth trigonal curve not contained in the image of $\mu$. Then 
\[ (7g+6) (\lambda \cdot B) \geq g (\delta \cdot B).\]
\end{corollary}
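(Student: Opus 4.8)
The statement follows formally from \autoref{thm:class_mu}; the plan is to lift $B$ to the admissible cover space and intersect with the divisor identity proved there.

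\textbf{Lifting.} First I would replace $B$ by a curve in $\o{\orb H}^3_g$. The composition $\o{\orb H}^3_g \to \o{\orb M}_g \to \o M_g$ is surjective onto $\o T_g$ with finite general fiber, both sides having dimension $2g+1$; moreover the generic point of $B$, being a smooth trigonal curve, has only finitely many $g^1_3$'s, so it is not in the locus where this map fails to be finite. Hence some irreducible curve $\tilde B$ in $\o{\orb H}^3_g$ dominating $B$ admits a finite surjection $\tilde B \to B$, say of degree $e \geq 1$. Since $\lambda$ and $\delta$ on $\o{\orb H}^3_g$ are by definition the pullbacks of $\lambda$ and $\delta$ from $\o M_g$, we have $\lambda\cdot\tilde B = e(\lambda\cdot B)$ and $\delta\cdot\tilde B = e(\delta\cdot B)$, so it suffices to prove $(7g+6)(\lambda\cdot\tilde B) \geq g(\delta\cdot\tilde B)$.

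\textbf{Intersecting.} Now I would intersect $\tilde B$ with the identity of \autoref{thm:class_mu},
\[ 2(g-3)[\mu] = (7g+6)\lambda - g\delta - cH - \sum c_i(g_1,g_2)\Delta_i(g_1,g_2), \qquad c, c_i(g_1,g_2) \geq 0.\]
The point is that $\tilde B$ is contained neither in $\mu$ nor in any higher boundary divisor, so that $\mu\cdot\tilde B$, $H\cdot\tilde B$ and $\Delta_i(g_1,g_2)\cdot\tilde B$ are all non-negative. For $\mu$ this is exactly the hypothesis: the image of $\mu$ in $\o M_g$ is closed (the morphism $\o{\orb H}^3_g \to \o M_g$ being proper), and the generic point of $B$ avoids it, hence so does the generic point of $\tilde B$. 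For the higher boundary divisors, the image of each $\Delta_i(g_1,g_2)$ in $\o M_g$ lies in $\partial\o M_g$ — at a point of $\Delta_i(g_1,g_2)$ no component of $\orb C$ has genus $g$, so the stable model of the coarse curve is singular — while the image of $H$ lies in the hyperelliptic locus; since a smooth trigonal curve of genus $g \geq 4$ is neither singular nor hyperelliptic, the generic point of $B$, and hence of $\tilde B$, lies in none of these. As $2(g-3) > 0$ for $g \geq 4$, intersecting with $\tilde B$ and rearranging gives
\[ (7g+6)(\lambda\cdot\tilde B) - g(\delta\cdot\tilde B) = 2(g-3)(\mu\cdot\tilde B) + c(H\cdot\tilde B) + \sum c_i(g_1,g_2)(\Delta_i(g_1,g_2)\cdot\tilde B) \geq 0,\]
and dividing by $e$ yields the claimed inequality for $B$.

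\textbf{Where the work is.} All the substance is upstream, in \autoref{thm:class_mu} itself: the test-curve computations tabulated in \autoref{table:test-curves-mu}, the verification that every entry of its Residual column is non-negative for $g \geq 4$, and \autoref{thm:generic_boundary} guaranteeing that the special fibers are general in their strata — and these we are allowed to assume. Within the corollary the only mildly delicate points are the construction of the lift $\tilde B$ and the transfer of ``avoids the image of $\mu$'' from $\o M_g$ back to $\o{\orb H}^3_g$, which is where properness of the coarse map enters; neither is a real obstacle.
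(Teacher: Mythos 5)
Your proposal is correct and follows the same route as the paper: lift $B$ to the admissible-cover space, observe that its generic point avoids $\mu$ and all (higher) boundary divisors, and intersect with the class identity of \autoref{thm:class_mu} to conclude. The extra details you supply (the degree-$e$ covering $\tilde B \to B$, the argument that a smooth trigonal curve lies in none of the higher boundary images, and the positivity of $2(g-3)$) are exactly the points the paper leaves implicit.
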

\begin{proof}
  Lift $B \to \o T_g$ to $B \to \o{H}^3_g$. Then a generic point of $B$ lies in $H^3_g \setminus \mu$.  We conclude that $B$ intersects non-negatively with $\mu$ and with all the boundary divisors. From \autoref{thm:class_mu}, we get
\[ (7g+6) (\lambda \cdot B) \geq g (\delta \cdot B).\]
\end{proof}

\subsection{The odd genus case}\label{sec:divisor_odd}
Let $\tau \subset \o {\orb H}^3_g$ be the closure of the locus consisting of $[\phi \from C \to \P^1]$ for which the scroll $\P E$ is isomorphic to $\F_1$ and the image of the embedding $C \into \P E$ is tangent to the directrix, where as usual $E = \dual{(\phi_*O_C/O_{\P^1})}$. Then $\tau$ is a divisor. We call it the \emph{tangency divisor}. In the odd genus case, it plays the role of the Maroni divisor. The main task is computing its class in the Picard group of $\o {\orb H}^3_g$.

The setup of the test-curve calculations is the same as that in the even genus case (\autoref{sec:divisor_even}). In particular, we retain the meanings of $B$, $\orb S_i$ (for $0 \leq i \leq 3$), $s$, $F_0$ and $F_\infty$. We take $g = 2n-1$ and $L$, $M$ sufficiently ample line bundles pulled from $B$.

\autoref{table:test-curves-tau} lists various test-curves, their intersection with the relevant boundary divisors and the value of the \emph{Residual}, defined by
\[ \text{Residual} = (21g+27)\lambda - (3g+1)\delta - 2\tau.\] The divisors $\lambda$ and $\delta$ are computed by applying \autoref{thm:kappa-lambda} and using $12 \lambda = \kappa + \delta$ for the induced family $\pi \from C \to B$, where $C$ is the coarse space of $\orb C$. The divisor $\tau$ is calculated as follows. We pick a section $\sigma \from \orb S \to \P E$, given by a quotient $E \to Q$, where $Q$ is a line bundle on $\orb S$, such that $\sigma$ agrees with the directrix on the generic fiber. We consider the curve $D = \sigma(\orb S) \cap \orb C$. The points of $\tau$ on the base $B$ are simply the branch points of $D \to B$. To compute their number, we note that the class of $D$ on $\orb S$ is given by
\begin{align*}
  [D] &= \sigma^* [\orb C]  \\
  &= \sigma^* (3 \xi - c_1(E)) = 3 c_1(Q) - c_1(E).
\end{align*}
By adjunction, we get
\begin{equation}\label{eqn:tau}
  \begin{split}
  \tau &= c_1(\omega_{D/B}) \\
  &= (3c_1(Q) - c_1(E)) \cdot (3c_1(Q) - c_1(E) + c_1(\omega_{\orb     S/B})).
\end{split}
\end{equation}
The curves in \autoref{table:test-curves-tau} intersect the higher boundary divisors in general points. Hence, the special fiber does not contribute to $\tau$. The formula \eqref{eqn:tau}, however, includes a contribution from the special fiber for some test-curves, which we must correct. We comment upon these adjustments in \autoref{rem:odd_adjustments}. In \autoref{table:test-curves-tau}, the role of $Q$ is always played by the bundle of the form $M(\cdots)$.
\renewcommand{\arraystretch}{2}
\begin{longtable}{| l | p{.38\textwidth} | p{.17\textwidth} | p{.35\textwidth} |}
\caption{Test-curve calculations for the class of $\tau$ (odd $g$)}\\
\hline
$\orb S$ & $E$ & Intersection with higher boundary & Residual\\
\hline
\endhead
\hline
\endfoot
$\orb S_0$ & 
$E = L((n+1)s) \oplus M(ns)$ & 
---&
$0$\\

$\orb S_1$ &
$L((n+1)s - bF_\infty)\oplus M(ns-bF_\infty)$ \newline
with $1 \leq b \leq n-1$ &
$\Delta_1(g_1,g_2) = 1$\newline
$g_1 = 2(n-b)-1$\newline
$g_2 = 2b - 2$ &
$\frac{3}{2}g_2(3g_1+1)$
\\

$\orb S_1$ &
Generic extension ($ 2 \leq b \leq 2n-1$)\newline
$L((n+1)s-bF_\infty) \into E \onto M(ns)$&
$\Delta_4(g_1,g_2) = 1$\newline
$g_1 = 2n-b-1$\newline
$g_2 = b-1$&
$\frac{3}{2}g_2(g_1g_2+g_2^2+5g_1+g_2+4)$
\\

$\orb S_1$ &
Extension (with $n+1 \leq b \leq 2n-1$)\newline
$L((n+1)s-bF_\infty) \into E \onto M(ns)$ \newline
such that $E = O \oplus O(2n-b+1)$ on $F_0$&
$\Delta_6(g_1,g_2) = 1$\newline
$g_1 = 2n-b$\newline
$g_2 = b-1$&
$\frac{3}{2}g_2(g_1g_2+g_2^2+5g_1-1) + 3g+1$
\\

$\orb S_1$ &
$b = 2n$ in the above case.
&
$H = 2$&
$\frac{3}{2}g(g^2+3)+2$

\\
$\orb S_2$ &
$L((n+1)s - bF_\infty) \oplus M(ns-(b-1)F_\infty)$ &
$\Delta_2(g_1,g_2) = 1$\newline
$g_1 = 2n-b$\newline
$g_2 = b - 2$ &
$9g_1g_2$
\\

$\orb S_2$ &
Generic extension (with $b$ odd)\newline
$L((n+1)s - bF_\infty) \into E \onto M(ns)$ &
$\Delta_5(g_1,g_2) = 1$\newline
$g_1 = 2n-\frac{b+1}{2}$\newline
$g_2= \frac{b-1}{2} $ &
$3g_2(g_2^2+g_1g_2+4g_1-g_2+4)-3g-1$
\\

$\orb S_3$ &
$L((n+1)s - bF_\infty) \oplus M(ns-(b-1)F_\infty)$\newline
with $b \equiv 2 \pmod 3$&
$\Delta_3(g_1,g_2) = 1$\newline
$g_1 = 2n-\frac{2b-1}{3}$\newline
$g_2 = \frac{2b - 4}{3}$ &
$\frac{3}{2}(9g_1g_2-2g_1-g_2)-1$
\\

$\orb S_3$ &
$L((n+1)s - bF_\infty) \oplus M(ns-(b-2)F_\infty)$\newline
with $b \equiv 1 \pmod 3$&
$\Delta_3(g_1,g_2) = 1$\newline
$g_1 = 2n-\frac{2b-2}{3}$\newline
$g_2 = \frac{2b - 5}{3}$ &
$\frac{3}{2}(9g_1g_2-g_1-2g_2)-1$
\label{table:test-curves-tau}
\end{longtable}
\begin{remark}\label{rem:odd_adjustments}
  In the following cases, we have to make adjustments to $\delta$ and
  $\tau$. The adjustments for $\delta$ are for the same reasons as in
  the case of even genus (\autoref{rem:even_adjustments}). For the
  adjustments to $\tau$, recall the notation in \eqref{eqn:tau}:
  $\sigma \from \orb S \to \P E$ is given by $E \to M(\cdots)$; we set
  $D = \sigma(\orb S) \cap \orb C$ and get $\tau$ by counting the
  number of branch points of $D \to B$ using adjunction.
  \begin{enumerate}
  \item When the special fiber is in $\Delta_4(g_1,g_2)$, the curve
    $D$ includes the rational tail on the central fiber. This tail
    unnecessarily contributes $-2$ in the adjunction formula,
    which we must correct. Also, we must subtract $1$ from the
    $\delta$ counted for $\pi \from C \to B$.
  \item When the special fiber is in $\Delta_5(g_1,g_2)$, the curve
    $D$ includes the orbifold rational tail on the central fiber. This
    tail unnecessarily contributes $-3/2$ in the adjunction formula,
    which we must correct. Also, we must subtract $2$ from the
    $\delta$ counted for $\pi \from C \to B$.
  \item When the special fiber is in $\Delta_6(g_1,g_2)$, the curve
    $D$ includes the rational tail on the central fiber. This
    tail unnecessarily contributes $-2$ in the adjunction formula,
    which we must correct. Also, we must subtract $2$ from the
    $\delta$ counted for $\pi \from C \to B$.
  \item When the central fiber is in $H$ (in the guise of
    $\Delta_6(g_1,0)$ in the fifth row), the curve $D$ includes the
    rational tail on the central fiber. This tail unnecessarily
    contributes $-2$ in the adjunction formula, which we must
    correct. Also, we must subtract $3$ from the $\delta$ counted for
    $\pi \from C \to B$.
  \end{enumerate}
\end{remark}

\begin{proposition}
  The central fiber of the family of curves in
  \autoref{table:test-curves-tau} is generic in the boundary divisor
  indicated in the third column.
\end{proposition}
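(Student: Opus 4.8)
The proposition asserts genericity in the boundary for the odd-genus test curves of Table~\ref{table:test-curves-tau}. The strategy mirrors the proof of \autoref{thm:generic_boundary} in the even case: I would reduce to showing that the image $V \subset H^0(\Sym^3 E \otimes \dual{\det E}|_0)$ of global sections is large enough that generic members of $V$ sweep a dense subset of the relevant $\Delta_i(g_1,g_2)$ (or $H$). By taking $L$, $M$ sufficiently ample, the problem becomes local around $0 \in B$, and $V$ is the image of $\pi^*\pi_*(\Sym^3 E \otimes \dual{\det E})$ in the fiber. So the content is: every generic point $[\phi \from C \to P]$ of the boundary divisor in question admits a smoothing inside the family described in the table, with smooth total space $\orb S$, balanced generic fiber, and with $\orb S \to B$ of the prescribed type $\orb S_i$.

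\smallskip

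First I would handle $\Delta_1$, $\Delta_2$, and $\Delta_3$, where both branches of $C \to P$ are connected and hence balanced for generic moduli. Here, given a generic $[\phi \from C \to P]$ with $P = F_0 \cup F_\infty$, one writes $E = \dual{(\phi_*O_{\orb C}/O_{\orb S})}$ for a smoothing over $(B,0)$ and checks that $E$ has the split form listed in the table on the special fiber; since $E$ is $\pi$-positive and the branches are balanced, one gets $H^1(G|_0) = 0$ for the relevant twist $G$, hence surjectivity of $\pi^*\pi_*G \to H^0(G|_0)$, and the nowhere-vanishing section of $G|_0$ lifts. This shows the desired extension $0 \to L \to E \to M \to 0$ (or the split form) is realized, and counting dimensions confirms the sweep is dense. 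The verification that the prescribed $(g_1,g_2)$ and the prescribed twist $-bF_\infty$ match the generic splitting type is the same bookkeeping as in the even case.

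\smallskip

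The subtle cases are $\Delta_4$, $\Delta_5$, $\Delta_6$, and $H$ (the last as $\Delta_6(g_1,0)$), where one branch is a hyperelliptic tail $C_2 \to F_\infty$ of degree two, $C_1 \isom F_\infty$ is the degree-one branch, and the map on $F_\infty$ need not be balanced. Exactly as in the even-genus argument, I would contract $F_\infty$ together with $C_1$, $C_2$ to produce a highly singular balanced triple cover $C' \to \P^1$ (an $A_{2g_2+1}$ singularity for $\Delta_4$, $\Delta_6$, an $A_{2g_2}$ singularity for $\Delta_5$, via the crimping of \autoref{fig:crimping}), extend to a triple cover $\orb C' \to \P^1 \times B$, and track the flat limit of a family of divisors. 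The one extra wrinkle over \autoref{thm:generic_boundary} is that here the relevant section is not a nowhere-vanishing section of a rank-two bundle but the directrix quotient $E \to Q = M(\cdots)$ and the tangency of $C$ to it; but the same mechanism applies --- since the contracted cover $\orb C' \to \P^1 \times B$ has balanced central fiber, its directrix and the intersection $D' = \sigma'(\orb S') \cap \orb C'$ behave in families, the limit $X'_0$ stays away from the singularity of $C'$ for generic choices, and pulling back recovers a smoothing of the prescribed boundary type realized inside the table.

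\smallskip

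\textbf{Main obstacle.} The crux is the last paragraph: verifying that the crimped cover $C' \to \P^1$ is genuinely balanced for a \emph{generic} central fiber and that no spurious components (fibers of $\P G|_{F_0} \to F_0$, or the curve $C_2$) appear in the flat limit --- i.e.\ ruling out the correction term $aN$ with $a \neq 0$. This requires the same $\sigma \cdot C_2 = 0$ versus $N \cdot C_2 \neq 0$ argument as before, now adapted to the tangency-divisor setup, plus a check that the unbalancedness of $E$ on $F_\infty$ does not obstruct the existence of the needed extension. Everything else is a routine transcription of the even-genus proof with $g = 2n-1$, together with the arithmetic that the table's $(g_1,g_2,b)$ values are consistent with the combinatorics of $\orb S_i \to B$.
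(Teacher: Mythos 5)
There is a genuine gap in the second half, precisely at the step the paper has to modify. Your argument for $\Delta_4$, $\Delta_5$, $\Delta_6$ (and $H$) ends by asserting that ``the limit $X'_0$ stays away from the singularity of $C'$ for generic choices.'' In the even-genus proof this conclusion had a specific mechanism behind it: there the crimped cover $C' \to \P^1$ being balanced made $G' = \dual{L'}\otimes E'$ balanced on the central fiber, so $\P G'|_0 \cong \P^1\times\P^1$ and the flat limit $X'_0$ was a \emph{movable} constant section, which a generic choice of the family $X_t$ pushes off the $A_k$ singularity. In the odd-genus situation this mechanism breaks: ``balanced'' now means $E'|_0 \cong O(n)\oplus O(n+1)$, the sub-line-bundle $L'$ restricts to the degree-$(n+1)$ piece, so $G'|_0 \cong O(-1)\oplus O$ is unbalanced, $\P G'|_0 \cong \F_1$, and the limit divisor in class $\sigma$ is forced to be the directrix --- it is rigid, and no genericity of $X_t$ can move it. The paper's proof handles this by strengthening the genericity hypothesis on the central fiber $[\phi\from C\to P]$ itself: one assumes not only that the crimped cover $C' \to \P^1$ is balanced, but also that its $A_k$ singularity lies away from the directrix in the embedding $C' \into \F_1$ (a condition that does hold for a general point of the boundary divisor). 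Your proposal never imposes this, and without it the step you quote is simply false as stated.

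A secondary point: you present the ``extra wrinkle'' as being about the directrix quotient $E \to Q = M(\cdots)$ and the tangency of $C$ to it. That quotient only enters the \emph{computation of the class} $\tau$ on the test curves; the genericity proposition itself is, exactly as in the even case, about producing a nowhere vanishing map $L \to E$ (equivalently a smoothing of a general boundary point realized inside the table), and tangency plays no role in its statement or proof. The directrix enters only in the way described above, through the rigidity of the limit section, not through the definition of $\tau$.
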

\begin{proof}
  The proof is the same as that of \autoref{thm:generic_boundary},
  with one modification in the second half (dealing with $\Delta_4$,
  $\Delta_5$ and $\Delta_6$). Consider the highly singular trigonal
  curve $C' \to \P^1$ of genus $g$ obtained by contracting the
  rational and the hyperelliptic tail of the special fiber $\phi \from
  C \to P$. As a part of the genericity of the central fiber $\phi
  \from C \to P$, we must not only assume that $C' \to \P^1$ is
  balanced, but also that the $A_k$ singularity of $C'$ is away from
  the directrix in the embedding $C' \into \F_1$. The rest of the proof
  is almost verbatim.
\end{proof}

We readily deduce the analogues of \autoref{thm:class_mu} and \autoref{thm:sweeping_slope_sharp_even}.
\begin{proposition}\label{thm:class_tau}
  Let $g \geq 5$ be odd. In the rational Picard group of $\o{\orb H}_g^3$, we have
  \[ 2[\tau] = (21g+27)\lambda - (3g+1)\delta - cH -\sum c_i(g_1,g_2)\Delta_i(g_1,g_2),\]
  where $c \geq 0$ and $c_i \geq 0$. In particular, a positive multiple of $(21g+27)\lambda - (3g+1)\delta$ is equivalent to an effective divisor.
\end{proposition}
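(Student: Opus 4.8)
\textbf{Proof proposal for \autoref{thm:class_tau}.}
The plan is to mirror the argument used for the Maroni divisor in \autoref{thm:class_mu}, replacing the Maroni divisor $\mu$ by the tangency divisor $\tau$ and the even-genus residual by the odd-genus residual. By the proposition just established, the test-curves in \autoref{table:test-curves-tau} meet the higher boundary divisors in generic points, so on each such test-curve the only contributions to the right-hand side of the claimed identity come from the generic boundary point recorded in the third column, with multiplicity one (or two, for $H$). First I would fix the basis of $\Pic_\Q(\o{\orb H}^3_g)$ supplied by the proposition in \autoref{sec:compactification}, namely $\lambda$, $\delta$, $H$ and the $\Delta_i(g_1,g_2)$, and write $2[\tau]$ as an unknown linear combination in this basis. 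Because $\Pic_\Q(\orb H^3_g)$ is trivial, $(21g+27)\lambda - (3g+1)\delta - 2[\tau]$ is supported on the boundary, so it is a combination of $H$ and the $\Delta_i(g_1,g_2)$; what remains is to pin down the coefficients.

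Next I would intersect both sides of the identity with each test-curve of \autoref{table:test-curves-tau}. The $\orb S_0$ row (with $E = L((n+1)s)\oplus M(ns)$, which realizes a generic balanced trigonal curve of genus $g=2n-1$ in $\F_1$) fixes the coefficients of $\lambda$ and $\delta$: here all higher-boundary intersections vanish, $\tau$ is computed from \eqref{eqn:tau}, and one reads off $\lambda = 3n-1$ and $\kappa = 15n-15$ exactly as in \autoref{thm:odd_sweeping}, so $\delta = 21n+3$ and the residual is $0$, confirming the normalization $(21g+27)\lambda-(3g+1)\delta$. Each of the remaining rows isolates a single higher-boundary class: the test-curve meets exactly the $\Delta_i(g_1,g_2)$ (or $H$) listed, transversely and generically, so its residual — computed by applying \autoref{thm:kappa-lambda}, the relation $12\lambda = \kappa+\delta$ on the coarse family $\pi\from C\to B$, the $\delta$-adjustments for contracted rational/hyperelliptic tails, and the $\tau$-adjustments of \autoref{rem:odd_adjustments} — equals the coefficient $c$ or $c_i(g_1,g_2)$ of that boundary class. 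Running through all boundary types $\Delta_1,\dots,\Delta_6$ and $H$ (using $\orb S_1$ for $\Delta_1,\Delta_4,\Delta_6,H$, the $\mu_2$-orbi-surface $\orb S_2$ for $\Delta_2,\Delta_5$, and the $\mu_3$-orbi-surface $\orb S_3$ for $\Delta_3$, with the two congruence classes of $b$ handled separately) determines every coefficient and produces the Residual column.

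Finally, to obtain the effectivity statement I would verify that each entry of the Residual column is nonnegative for all admissible values of $g_1,g_2$ once $g \geq 5$. This is an elementary check: each residual is a polynomial in $g_1,g_2$ with positive leading behavior on the relevant range (e.g.\ $\frac32 g_2(3g_1+1)$ for $\Delta_1$, $9g_1g_2$ for $\Delta_2$, $\frac32g_2(g_1g_2+g_2^2+5g_1+g_2+4)$ for $\Delta_4$, the two $\Delta_3$ expressions $\frac32(9g_1g_2-2g_1-g_2)-1$ and $\frac32(9g_1g_2-g_1-2g_2)-1$, the $\Delta_5$ expression $3g_2(g_2^2+g_1g_2+4g_1-g_2+4)-3g-1$, and the $H$ and $\Delta_6$ entries), so nonnegativity reduces to checking a few boundary cases of small $g_1$ or $g_2$. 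Since $\tau$ is manifestly an effective divisor, rearranging the identity exhibits $2[\tau] + cH + \sum c_i(g_1,g_2)\Delta_i(g_1,g_2) = (21g+27)\lambda - (3g+1)\delta$ with all $c, c_i \geq 0$, which is the asserted effectivity of a positive multiple of $(21g+27)\lambda-(3g+1)\delta$.

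I expect the main obstacle to be the bookkeeping in the $\orb S_2$ and $\orb S_3$ rows: the adjunction computation \eqref{eqn:tau} for $\tau$ takes place on the orbi-surface, so one must correctly account for the orbifold structure at the node and for the fractional $\tau$-contribution of the orbifold rational tail ($-3/2$ in the $\Delta_5$ case), while simultaneously applying the orbifold correction term (which is $-2/9$ for $\orb S_3$) in \autoref{thm:kappa-lambda} and the $\delta$-adjustments from contracting unstable tails. Getting these fractional corrections to line up so that the Residual column comes out as the displayed polynomials — and in particular integral and nonnegative — is the delicate step; the rest is routine intersection theory on ruled (orbi-)surfaces.
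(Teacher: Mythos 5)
Your proposal takes essentially the same route as the paper, whose proof of this proposition is just the $\tau$-analogue of \autoref{thm:class_mu}: write the identity in the basis $\lambda,\delta,H,\Delta_i(g_1,g_2)$, intersect with the test curves of \autoref{table:test-curves-tau} (which meet the higher boundary generically by the preceding proposition), compute $\lambda$ and $\delta$ via \autoref{thm:kappa-lambda} together with $12\lambda=\kappa+\delta$ and the adjustments of \autoref{rem:odd_adjustments}, compute $\tau$ via \eqref{eqn:tau}, read off $c$ and $c_i(g_1,g_2)$ as the Residual entries, and check nonnegativity for $g\geq 5$. The one imprecision is in your treatment of the $\orb S_0$ row: specializing to the sweeping-family numbers $\lambda=3n-1$, $\kappa=15n-15$ (for which in fact $\tau\cdot B=0$) gives a single relation that fixes only the ratio $(21g+27):(3g+1)$, so to pin down both the $\lambda$- and $\delta$-coefficients and the factor $2$ on $\tau$ one must use the freedom in $\deg L$ and $\deg M$ (as the table's ``sufficiently ample $L$, $M$'' intends), computing $\tau\cdot B$ by \eqref{eqn:tau} for general twists rather than importing the numbers from \autoref{thm:odd_sweeping}.
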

\begin{corollary}\label{thm:sweeping_slope_sharp_odd}
  Let $g \geq 5$ be odd. Let $B \to \o T_g$ be a curve such that a
  generic point of $B$ corresponds to a smooth trigonal curve not
  contained in the image of $\tau$. Then
  \[ (21g+27) (\lambda \cdot B) \geq (3g+1) (\delta \cdot B).\]
\end{corollary}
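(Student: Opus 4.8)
The plan is to repeat, essentially verbatim, the argument that proves \autoref{thm:sweeping_slope_sharp_even}, now feeding in the class computation of \autoref{thm:class_tau} in place of \autoref{thm:class_mu}. First I would lift the given curve $B \to \o T_g$ to $\o{\orb H}^3_g$. The forgetful morphism $\o{\orb H}^3_g \to \o M_g$ is proper with image $\o T_g$ (see \autoref{sec:compactification}), so the generic point of $B$ lifts, possibly after a finite extension of the function field of $B$; properness then extends the lift to a morphism $\tilde B \to \o{\orb H}^3_g$ with $\tilde B \to B$ finite. Such a base change multiplies $\lambda \cdot B$ and $\delta \cdot B$ by the same positive integer, hence does not affect the claimed inequality. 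By hypothesis, the generic point of $\tilde B$ lies in $\orb H^3_g$ and is not contained in $\tau$.

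Next, since $\tilde B$ is an irreducible curve contained neither in $\tau$ nor in any of the boundary divisors $H$ or $\Delta_i(g_1,g_2)$, every one of the intersection numbers $\tau \cdot \tilde B$, $H \cdot \tilde B$, $\Delta_i(g_1,g_2) \cdot \tilde B$ is non-negative. Intersecting the identity of \autoref{thm:class_tau},
\[ 2[\tau] = (21g+27)\lambda - (3g+1)\delta - cH - \sum c_i(g_1,g_2)\Delta_i(g_1,g_2), \]
with $\tilde B$, and using $c \geq 0$ and $c_i(g_1,g_2) \geq 0$, gives
\[ 0 \leq 2\,(\tau \cdot \tilde B) \leq (21g+27)(\lambda \cdot \tilde B) - (3g+1)(\delta \cdot \tilde B). \]
Since $\lambda$ and $\delta$ on $\o{\orb H}^3_g$ are pulled back from $\o M_g$, the projection formula gives $\lambda \cdot \tilde B = \deg(\tilde B/B)\,(\lambda \cdot B)$ and similarly for $\delta$. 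Dividing through by $\deg(\tilde B/B)$ yields $(21g+27)(\lambda \cdot B) \geq (3g+1)(\delta \cdot B)$, as desired.

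The only point that requires real care --- and hence the main, if minor, obstacle --- is the lifting step: one must confirm that a curve $B \subset \o T_g$ whose general member is a smooth trigonal curve not lying in the image of $\tau$ lifts to a curve in $\o{\orb H}^3_g$ whose general point avoids both $\tau$ and the whole boundary. Granting that, the conclusion is a purely formal consequence of \autoref{thm:class_tau} and the non-negativity of the coefficients $c$ and $c_i$ recorded there; the substance of the matter is the computation of $[\tau]$ itself, effected by the test-curve table above.
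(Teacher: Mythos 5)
Your proposal is correct and matches the paper's argument: the paper deduces this corollary exactly as it proves \autoref{thm:sweeping_slope_sharp_even}, by lifting $B$ to $\o{\orb H}^3_g$, noting the generic point lies in $\orb H^3_g\setminus\tau$ so all intersections with $\tau$ and the boundary divisors are non-negative, and then intersecting the class identity of \autoref{thm:class_tau} with the lifted curve. Your extra care about a possible finite base change in the lifting step is a harmless refinement of the same approach.
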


\bibliographystyle{abbrvnat}
\bibliography{TrigonalSlopesBib}

\end{document}